\def\BState{\State\hskip-\ALG@thistlm}
\algnewcommand{\IfThenElse}[3]{
	\State \algorithmicif\ #1\ \algorithmicthen\ #2\ \algorithmicelse\#3}
\DeclareMathOperator*{\argmax}{arg\,max}
\DeclareMathOperator*{\argmin}{arg\,min}
\newcommand{\bx}{{\mathbf{x}}}
\newcommand{\by}{{\mathbf{y}}}
\newcommand{\bO}{{\mathcal{O}}}
\newcommand{\cSt}{\Theta}
\newcommand{\cSa}{\mathcal{A}}
\newcommand{\cX}{{\cal X}}
\newcommand{\cY}{{\cal Y}}
\newcommand{\bbalpha}{\bar{\boldsymbol{\alpha}}}
\newcommand{\proj}{\rm proj}
\newcommand{\st}{\rm s.t.}
\newcommand{\fl}{f_{\lambda}}
\newcommand{\gl}{g_{\lambda}}
\newcommand{\btheta}{\boldsymbol{\theta}}
\newcommand{\balpha}{\boldsymbol{\alpha}}
\newtheorem{example}{Example}[section]
\newtheorem{theorem}{Theorem}[section]
\newtheorem{lemma}[theorem]{Lemma}
\newtheorem{corollary}[theorem]{Corollary}
\newtheorem{definition}[theorem]{Definition}
\newtheorem{remark}[theorem]{Remark}
\newtheorem{assumption}[theorem]{Assumption}
\title{Solving a Class of Non-Convex Min-Max Games Using Iterative First Order Methods}
\author{Maher Nouiehed \\{nouiehed@usc.edu} \thanks{Department of Industrial and Systems Engineering, University of Southern California}
\And
Maziar Sanjabi\\{sanjabi@usc.edu} \thanks{Data Science and Operations Department, Marshall School of Business, University of Southern California}
\And
Tianjian Huang \\{tianjian@usc.edu} \thanks{Department of Industrial and Systems Engineering, University of Southern California}
\And
Jason D. Lee\\{jasonlee@princeton.edu} \thanks{Department of Electrical Engineering, Princeton University}
\And
Meisam Razaviyayn\\{razaviya@usc.edu} \thanks{Department of Industrial and Systems Engineering, University of Southern California}}
\begin{document}

\maketitle

\begin{abstract} Recent applications that arise in machine learning have surged significant interest in solving min-max saddle point games. This problem has been extensively studied in the convex-concave regime for which a global equilibrium solution can be computed efficiently. In this paper, we study the problem in the non-convex regime and show that an $\varepsilon$--first order stationary point of the game can be computed when one of the player’s objective can be optimized to global optimality efficiently. In particular, we first consider the case where the objective of one of the players satisfies the Polyak-{\L}ojasiewicz (PL) condition. For such a game, we show that a simple multi-step gradient descent-ascent algorithm finds an $\varepsilon$--first order stationary point of the problem in $\widetilde{\mathcal{O}}(\varepsilon^{-2})$ iterations. Then we show that our framework can also be applied to the case where the objective of the ``max-player" is concave. In this case, we propose a multi-step gradient descent-ascent algorithm that finds an $\varepsilon$--first order stationary point of the game in $\widetilde{\cal O}(\varepsilon^{-3.5})$ iterations, which is the best known rate in the literature. We applied our algorithm to a fair classification problem of Fashion-MNIST dataset and observed that the proposed algorithm results in smoother training and better generalization. 
\end{abstract}

\section{Introduction}
Recent years have witnessed a wide range of machine learning and robust optimization applications being formulated as a min-max saddle point game; see \cite{sanjabi2018convergence, dai2018sbeed, dai2018kernel, rafique2018non, ghosh2018efficient, sinha2018certifying} and the references therein. 
Examples of problems that are formulated under this framework include generative adversarial networks (GANs) \cite{sanjabi2018convergence}, reinforcement learning \cite{dai2018sbeed}, adversarial learning \cite{sinha2018certifying}, learning exponential families \cite{dai2018kernel}, fair statistical inference~\cite{edwards2015censoring, xu2018fairgan, sattigeri2018fairness, madras2018learning}, generative adversarial imitation learning \cite{cai2019global,ho2016generative}, distributed non-convex optimization \cite{lu2019block} and many others. These applications require solving an optimization problem of the form
\begin{align}
\min_{\btheta \in \cSt}\; \max_{\balpha \in \cSa} \;\;f(\btheta,\balpha).
\end{align}
This optimization problem can be viewed as a zero-sum game between two players. The goal of the first player is to minimize  $f(\btheta, \balpha)$ by tuning $\btheta$, while the other player's objective is to maximize $f(\btheta, \balpha)$ by tuning $\balpha$. Gradient-based methods, especially gradient descent-ascent (GDA), are widely used in practice to solve these problems. GDA alternates between a gradient ascent steps on $\balpha$ and a gradient descent steps on $\btheta$. Despite its popularity, this algorithm fails to converge even for simple bilinear zero-sum games~\cite{mescheder2018training, mai2018cycles, daskalakis2018last, balduzzi2018mechanics, letcher2019differentiable}. This failure  was fixed by adding negative momentum or by using primal-dual methods proposed by \cite{gidel2018negative,gidel2018variational, chambolle2016ergodic, daskalakis2017training, daskalakis2018limit, liang2018interaction}. 




When the objective~$f$ is convex in $\btheta$ and concave in $\balpha$, the corresponding variational inequality becomes monotone.  This setting has been extensively studied and different algorithms have been developed for finding a Nash equilibrium 
\cite{nesterov2007dual, gidel2018variational, monteiro2010complexity, juditsky2016solving, mertikopoulos2018mirror, gidel2016frank, hamedani2018iteration, mokhtari2019unified, facchinei2007finite, nemirovski2004prox}. Moreover, \cite{dang2015convergence} proposed an algorithm for solving a more general setting that covers both monotone and psuedo-monotone variational problems.



While the convex-concave setting has been extensively studied in the literature, recent machine learning applications urge the necessity of moving beyond these classical settings. For example, in a typical GAN problem formulation, two neural networks (generator and discriminator) compete in a non-convex zero-sum game framework \cite{goodfellow2016deep}.
For general non-convex non-concave games, \cite[Proposition~10]{jin2019minmax} provides an example for which local Nash equilibrium does not exist. Similarly, one can show that even second-order Nash equilibrium may not exist for non-convex games, see Section~\ref{section:stationarity} for more details. Therefore, a well-justified objective is to find first order Nash equilibria of such games \cite{pang2016unified}; see definitions and discussion in Section~\ref{section:stationarity}.  The first order Nash equilibrium can be viewed as a direct extension of the concept of first order stationarity in optimization to the above min-max game. While $\varepsilon$--first order stationarity in the context of optimization can be found efficiently in ${\cal O}(\varepsilon^{-2})$ iterations with gradient descent algorithm \cite{nesterov2013introductory}, the question of whether it is possible to design a gradient-based algorithm that can find an $\varepsilon$--first order Nash equilibrium  for general non-convex saddle point games remains open.\\

%
%

Several recent results provided a partial answer to the problem of finding first-order stationary points of a non-convex min-max game. For instance, \cite{sanjabi2018convergence} proposed a stochastic gradient descent algorithm for the case when $f(\cdot,\cdot)$ is strongly concave in~$\balpha$ and show convergence of the algorithm to an $\varepsilon$--first-order Nash equilibrium  with $\widetilde{\cal O}(\varepsilon^{-2})$ gradient evaluations. Also, the work~\cite{jin2019minmax} analyzes the gradient descent algorithm with Max-oracle and shows $O(\epsilon^{-4})$ gradient evaluations and max-oracle calls for solving min-max problems where the inner problem can be solved in one iteration using an existing oracle.
More recently, \cite{lu2019block, lu2019hybrid} considered the case where $f$ is concave in $\balpha$. They developed a  descent-ascent algorithm  with iteration complexity $\widetilde{\cal O}(\varepsilon^{-4})$. 
In this non-convex concave setting, \cite{rafique2018non} proposed a  stochastic sub-gradient descent method with worst-case complexity $\widetilde{\cal O}(\varepsilon^{-6})$. Under the same concavity assumption on $f$, in this paper, we propose an alternative multi-step framework that finds an $\varepsilon$--first order Nash equilibrium/stationary with $\widetilde{\cal O}(\varepsilon^{-3.5})$ gradient evaluations. 

In an effort to solve the more general non-convex non-concave setting, \cite{lin2018solving} developed a framework that converges to $\varepsilon$-first order stationarity/Nash equilibrium under the assumption that there exists a solution to the Minty variational inequality at each iteration. Although among the first algorithms with have theoretical convergence guarantees in the non-convex non-concave setting, the conditions required are strong and difficult to check. To the best of our knowledge, there is no practical problem for which the Minty variational inequality condition has been proven. 
With the motivation of exploring the non-convex non-concave setting, we propose a simple multi-step gradient descent ascent algorithm for the case where the objective of one of the players satisfies the Polyak-{\L}ojasiewicz (PL) condition. We show  the worst-case complexity of  $\widetilde{\cal O}(\varepsilon^{-2})$ for our algorithm. This rate is  optimal in terms of dependence on $\varepsilon$ up to logarithmic factors as discussed in Section~\ref{section:PL-Games}. 
Compared to Minty variational inequality condition used in \cite{lin2018solving}, the PL condition is very well studied in the literature and has been theoretically verified for objectives of optimization problems arising in many practical problems. For example, it has been proven to be true for objectives of over-parameterized deep networks \cite{du2018gradientb},  learning LQR models \cite{fazel2018global}, phase retrieval \cite{sun2018geometric}, and many other simple problems discussed in \cite{PL_karimi_2016}. In the context of min-max games, it has also been proven useful in generative adversarial imitation learning with LQR dynamics \cite{cai2019global}, as discussed in Section~\ref{section:PL-Games}.

The rest of this paper is organized as follows. In Section~\ref{section:stationarity} we define the concepts of  First-order Nash equilibrium (FNE) and $\varepsilon$--FNE. In Section~\ref{section:PL-Games}, we describe our algorithm designed for min-max games with the objective of one player satisfying the PL condition. Finally, in Section~\ref{section:Non-Convex-Concave} we describe our method for solving games in which the function $f(\theta, \alpha)$ is concave in $\alpha$ (or convex in $\theta$).

\section{Two-player Min-Max Games and First-Order Nash Equilibrium}\label{section:stationarity}
Consider the two-player  zero sum min-max game
\begin{align}
\min_{\btheta \in \cSt}\; \max_{\balpha \in \cSa} \;\;f(\btheta,\balpha),\label{eq: game1-cons}
\end{align}
where $\cSt$ and $\cSa$ are both convex sets, and $f(\btheta, \balpha)$ is a  continuously differentiable function. We say $(\btheta^*,\balpha^*)\in \cSt\times \cSa$ is a \textit{Nash equilibrium} of the game if 
\[ f(\btheta^*, \balpha) \leq f(\btheta^*, \balpha^*) \leq f(\btheta, \balpha^*) \quad \forall \, \btheta \in \cSt, \,\, \forall \, \balpha \in \cSa.\]
In convex-concave games, such a Nash equilibrium always exists \cite{jin2019minmax} and several algorithms were proposed to find Nash equilibria \cite{gidel2016frank, hamedani2018iteration}. However, in the non-convex non-concave regime, computing these points is in general NP-Hard. 
In fact, even finding  local Nash equilibria is NP-hard in the general non-convex non-concave regime.In addition, as shown by~\cite[Proposition~10]{jin2019minmax}, local Nash equilibria for general non-convex non-concave games may not exist.  Thus, in this paper we aim for the  less ambitious goal of finding \textit{first-order Nash equilibrium} which is defined in the sequel.
\begin{definition}[FNE] 
\label{def.FNE}
A point $(\btheta^*,\balpha^*)\in \cSt\times \cSa$ is a \textit{first order Nash equilibrium} (FNE) of the game~\eqref{eq: game1-cons} if  
\begin{equation}
\label{eq:FNEcond}
\langle \nabla_{\btheta} f(\btheta^*, \balpha^*), \btheta -\btheta^* \rangle \geq 0 \quad \forall \; \btheta \in \cSt \quad   {\rm   and  }
\quad \langle \nabla_{\balpha} f(\btheta^*, \balpha^*), \balpha -\balpha^* \rangle \leq 0 \quad \forall \;\balpha \in \cSa.
\end{equation}
\end{definition}
Notice that this definition, which is also used in \cite{pang2016unified, pang2011nonconvex}, contains the first order necessary optimality conditions of the objective function of each player \cite{bertsekas1999nonlinear}.  Thus they are necessary conditions for local Nash equilibrium.  Moreover, in the absence of constraints, the above definition simplifies to $\nabla_{\btheta} f(\btheta^*, \balpha^*)=0$   and   $\nabla_{\balpha} f(\btheta^*, \balpha^*)=0$, which are the well-known unconstrained first-order optimality conditions. 
Based on this observation, it is tempting to think that the above first-order Nash equilibrium condition does not differentiate between the min-max type solutions of \eqref{eq: game1-cons} and min-min solutions of the type $\min_{\btheta \in \cSt, \balpha\in \cSa} f(\btheta,\balpha)$. However, the direction of the second inequality in \eqref{eq:FNEcond} would be different if we have considered the min-min problem instead of min-max problem. This different direction makes the problem of finding a FNE non-trivial. The following theorem guarantees the existence of first-order Nash equilibria under some mild assumptions.
\begin{theorem}[Restated from Proposition~2 in \cite{pang2016unified}]
    Suppose the sets $\cSt$ and $\cSa$ are no-empty, compact, and convex. Moreover, assume that the function $f(\cdot,\cdot)$ is twice continuously differentiable. Then there exists a feasible point $(\bar{\btheta},\bar{\balpha})$ that is first-order Nash equilibrium.
\end{theorem}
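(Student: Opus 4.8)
The plan is to recognize that the pair of inequalities in~\eqref{eq:FNEcond} defining a first-order Nash equilibrium is, after stacking, exactly a variational inequality over the product set $\cSt \times \cSa$, and then to invoke the classical Hartman--Stampacchia existence theorem for variational inequalities. Viewing $\cSt \subseteq \R^n$ and $\cSa \subseteq \R^m$, I would first assemble the two players' gradient maps into the single operator $F : \cSt \times \cSa \to \R^{n+m}$ given by
\[
F(\btheta, \balpha) = \begin{pmatrix} \nabla_{\btheta} f(\btheta, \balpha) \\[2pt] -\nabla_{\balpha} f(\btheta, \balpha) \end{pmatrix}.
\]
Writing $z = (\btheta, \balpha)$ and $K = \cSt \times \cSa$, I claim that the FNE conditions~\eqref{eq:FNEcond} are equivalent to the single variational inequality $\langle F(z^*), z - z^* \rangle \geq 0$ for all $z \in K$. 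Indeed, adding the two inequalities of~\eqref{eq:FNEcond} yields this inequality; conversely, restricting the test point $z = (\btheta, \balpha)$ to $\balpha = \balpha^*$ recovers the first condition, while restricting to $\btheta = \btheta^*$ recovers the second. Hence it suffices to exhibit a solution of this variational inequality.

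Next I would verify the hypotheses of the existence theorem. Since $\cSt$ and $\cSa$ are nonempty, compact, and convex, so is the product $K$. Since $f$ is (twice) continuously differentiable, both gradient maps $\nabla_{\btheta} f$ and $\nabla_{\balpha} f$ are continuous, so $F$ is continuous on $K$. These are precisely the conditions under which Hartman--Stampacchia guarantees a point $z^* \in K$ with $\langle F(z^*), z - z^* \rangle \geq 0$ for all $z \in K$. Unwinding the stacking, $z^* = (\bar{\btheta}, \bar{\balpha})$ satisfies both inequalities of~\eqref{eq:FNEcond} and is therefore a first-order Nash equilibrium, establishing the theorem.

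I expect the only substantive point to be the fixed-point argument underlying the existence result, so for completeness I would make it self-contained via Brouwer's theorem. Consider the self-map $z \mapsto \proj_K\big(z - F(z)\big)$ on $K$; it is continuous (the projection onto a closed convex set is nonexpansive, hence continuous, and $F$ is continuous) and maps the nonempty compact convex set $K$ into itself, so Brouwer's fixed-point theorem yields a fixed point $z^*$ with $z^* = \proj_K(z^* - F(z^*))$. Invoking the standard projection characterization $\langle \proj_K(y) - y, \, z - \proj_K(y) \rangle \geq 0$ for all $z \in K$ with the choice $y = z^* - F(z^*)$ gives exactly $\langle F(z^*), z - z^* \rangle \geq 0$ for all $z \in K$, which is the desired variational inequality.

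The main obstacle, such as it is, lies entirely in the compactness hypothesis: it is what makes Brouwer applicable and removes any need for a coercivity or growth condition on $F$ (which would otherwise be required to confine iterates to a bounded region). No monotonicity of $F$ — and hence no convexity--concavity of $f$ — is used anywhere, which is exactly why the result holds in the fully non-convex non-concave regime; convexity enters only through the sets $\cSt, \cSa$, ensuring both that $K$ is convex (for Brouwer) and that the projection is well defined and single-valued. I would take care to note that the theorem asserts existence only, with no uniqueness and no claim that the point is a local or global Nash equilibrium.
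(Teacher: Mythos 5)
Your proposal is correct: the reduction of the two FNE inequalities to a single variational inequality over the product set $\cSt \times \cSa$, followed by the Hartman--Stampacchia existence argument (made self-contained via Brouwer applied to $z \mapsto \proj_K(z - F(z))$), is a complete and valid proof, and the equivalence step works precisely because $K$ is a product set so the test points can be restricted coordinate-wise. Note that the paper itself offers no proof of this statement --- it is restated from Proposition~2 of \cite{pang2016unified} --- and your argument is essentially the classical VI-existence route on which that cited result rests, so your write-up matches the intended proof while additionally being self-contained (and in fact only uses $C^1$ smoothness, weaker than the stated $C^2$ hypothesis).
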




The above theorem guarantees existence of FNE points even when (local) Nash equilibria may not exist. The next natural question is about the computability of such methods. Since in practice we use iterative methods for computation, we need to define the notion of approximate--FNE.

\begin{definition}[Approximate FNE]\label{def:cons-approx-stationarity}
	A point $(\btheta^*, \balpha^*)$ is said to be an $\varepsilon$--first-order Nash equilibrium ($\varepsilon$--FNE) of the game~\eqref{eq: game1-cons} if
	\[
	\cX(\btheta^*, \balpha^*) \leq \varepsilon \quad \mbox{and} \quad \cY(\btheta^*, \balpha^*) \leq \varepsilon,
	\]
	where 
	\begin{equation}\label{eq:X_k2}
	\cX(\btheta^*,\balpha^*) \triangleq - \min_{\btheta}\,\, \langle \nabla_{\btheta} f(\btheta^*, \balpha^*), \btheta -\btheta^* \rangle  \;\;  {\st} \; \btheta  \in \cSt, \, \|\btheta -\btheta^*\|\leq 1,
	\end{equation}
	and
	\begin{equation}\label{eq:Y_k2}
	\cY(\btheta^*,\balpha^*) \triangleq \max_{\balpha}\,\, \langle \nabla_{\balpha} f(\btheta, \balpha), \balpha - \balpha^*\rangle  \;\; {\st} \;\;  \balpha \in \cSa, \, \|\balpha - \balpha^*\|\leq 1.
	\end{equation}
\end{definition}

In the absence of constraints, $\varepsilon$--FNE in Definition~\ref{def:cons-approx-stationarity} reduces to 
$\|\nabla_{\btheta} f(\btheta^*, \balpha^*)\|\leq \varepsilon \quad {\rm and } \quad  \|\nabla_{\balpha} f(\btheta^*, \balpha^*)\| \leq \varepsilon.$

\begin{remark}
    The $\varepsilon$--FNE definition above is based on the first order optimality measure of the objective of each player. Such first-order optimality measure has been used before in the context of optimization; see~\cite{conn2000trust}. Such a condition guarantees that each player cannot improve their objective function using first order information. Similar to the optimization setting, one can define the second-order Nash equilibrium as a point that each player cannot improve their objective further by using first and second order information of their objectives. However, the use of second order Nash equilibria is more subtle in the context of games. The following example shows that such a point may not exist. Consider the game
\[\min_{-1 \leq \theta \leq 1} ~\max_{-2 \leq \alpha \leq 2} - \theta^2 + \alpha^2 +4\theta \alpha.\]
Then $(0,0)$ is the only first-order Nash equilibrium and is not a second-order Nash equilibrium. 
\end{remark}

In this paper, our goal is to find an $\varepsilon$--FNE of the game~\eqref{eq: game1-cons} using iterative methods. To proceed, we make the following standard assumptions about the smoothness of the objective function~$f$.
\begin{assumption}
	\label{assumption: LipSmooth-uncons}
	 The function~$f$ is continuously differentiable in both $\btheta$ and $\balpha$ and there exists constants $L_{11}$, $L_{22}$ and $L_{12}$ such that for every $\balpha,\balpha_1,\balpha_{2}\in \cSa$, and $\btheta,\btheta_1,\btheta_2 \in \cSt$, we have
	 \[\small
	\begin{array}{ll}
	\|\nabla_{\btheta} f(\btheta_1,\balpha)-\nabla_{\btheta} f(\btheta_2,\balpha)\|\leq L_{11}\|\btheta_1-\btheta_2\|,
	& \|\nabla_{\balpha} f(\btheta,\balpha_1)-\nabla_{\balpha} f(\btheta,\balpha_{2})\|\leq L_{22}\|\balpha_1-\balpha_{2}\|,\\
	\|\nabla_{\balpha} f(\btheta_1,\balpha)-\nabla_{\balpha} f(\btheta_2,\balpha)\|\leq L_{12}\|\btheta_1-\btheta_2\|,
	& \|\nabla_{\btheta} f(\btheta,\balpha_1)-\nabla_{\btheta} f(\btheta,\balpha_{2})\|\leq L_{12}\|\balpha_1-\balpha_{2}\|.\\
	\end{array}  
	\normalsize
	\]
	\normalsize
\end{assumption}
\normalsize




\section{Non-Convex PL-Game} \label{section:PL-Games}
In this section, we consider the problem of developing an ``efficient" algorithm  for finding an $\varepsilon$--FNE of~\eqref{eq: game1-cons} when the objective of one of the players satistys Polyak-{\L}ojasiewicz (PL) condition. To proceed, let us first formally define the Polyak-{\L}ojasiewicz (PL) condition.

\begin{definition}[Polyak-{\L}ojasiewicz Condition]\label{def: PL}
	A differentiable function $h(\bx)$ with the minimum value $h^* = \min_x\; h(\bx)$ is said to be $\mu$-Polyak-{\L}ojasiewicz ($\mu$-PL) if
	\begin{align}
	\frac{1}{2}\|\nabla h(\bx)\|^2\geq \mu (h(\bx)-h^*),\quad \forall x.
	\end{align}
\end{definition}

The PL-condition has been established and utilized for analyzing many practical modern problems \cite{PL_karimi_2016, fazel2018global,du2018gradientb,sun2018geometric, cai2019global}. Moreover, it is well-known that a function can be non-convex and still satisfy the PL condition~\cite{PL_karimi_2016}. Based on the definition above, we define a class of min-max PL-games.

\begin{definition}[PL-Game]\label{def: PLGame}
	We say that the min-max game~\eqref{eq: game1-cons} is a PL-Game if the max player is unconstrained, i.e., $\cSa = \mathbb{R}^n$, and there exists a constant $\mu>0$ such that the function $h_{\btheta}(\balpha) \triangleq -f(\btheta, \balpha)$ is $\mu$-PL for any fixed value of~$\btheta\in \cSt$.
\end{definition}

A simple example of a practical PL-game is detailed next.

\begin{example}[Generative adversarial imitation learning of linear quadratic regulators]
Imitation learning is a paradigm that aims to learn from an expert's demonstration of performing a task~\cite{cai2019global}. It is known that this learning process can be formulated as a min-max game~\cite{ho2016generative}. In such a game the minimization is performed over all the policies and the goal is to minimize the discrepancy between the accumulated reward for expert's policy and the proposed policy. On the other hand, the maximization is done over the parameters of the reward function and aims at maximizing this discrepancy over the parameters of the reward function. This approach is also referred to as generative adversarial imitation learning (GAIL)~\cite{ho2016generative}. The problem of generative adversarial imitation learning for linear quadratic regulators~\cite{cai2019global} refers to solving this problem for the specific case where the underlying dynamic and the reward function come from a linear quadratic regulator~\cite{fazel2018global}. To be more specific, this problem can be formulated~\cite{cai2019global} as $\min_{K} \max_{\btheta\in \Theta} m(K,\btheta)$, where $K$ represents the choice of the policy and $\btheta$ represents the parameters of the dynamic and the reward functions. Under the discussed setting, $m$ is strongly concave in $\btheta$ and PL in $K$ (see~\cite{cai2019global} for more details). Note that since $m$ is strongly concave in $\btheta$ and $PL$ in $K$, any FNE of the game would also be a Nash equilibrium point. Also note that the notion of FNE does not depend on the ordering of the $\min$ and $\max$. Thus, to be consistent with our notion of PL-games, we can formulate the problem as 
\begin{align}
    \min_{\btheta \in \Theta}~\max_K -m(K, \btheta)
\end{align}
Thus, generative adversarial imitation learning of linear quadratic regulators is an example of finding a FNE for a min-max PL-game.

\end{example}

In what follows, we present a simple iterative method for computing an $\varepsilon$--FNE of PL games.

\subsection{Multi-step gradient descent ascent for PL-games}
In this section, we propose a multi-step gradient descent ascent algorithm that finds an $\varepsilon$--FNE point for PL-games. At each iteration, our method runs multiple projected gradient ascent steps to estimate the solution of the inner maximization problem. This solution is then used to estimate the gradient of the inner maximization value function, which directly provides a descent direction. In a nutshell, our proposed algorithm is a gradient descent-like algorithm on the inner maximization value function. To present the ideas of our multi-step algorithm, let us re-write \eqref{eq: game1-cons} as
\begin{align}
\min_{\btheta\in\cSt} \;\;  g(\btheta), \label{eq: game_min_uncons}
\end{align}
where 
\begin{align}
g(\btheta) \triangleq \max_{\balpha\in \cSa} f(\btheta,\balpha). \label{eq:gEval_uncons}
\end{align}

 A famous classical result in optimization is Danskin's theorem~\cite{danskin_result_1995} which provides a sufficient condition under which the gradient of the value function $\max_{\balpha \in \cSa} f(\btheta, \balpha)$ can be directly evaluated using the gradient of the objective $f(\btheta, \balpha^*)$ evaluated at the optimal solution $\balpha^*$. This result requires the optimizer $\balpha^*$ to be unique. Under our PL assumption on $f(\btheta, \cdot)$, the inner maximization problem~\eqref{eq:gEval_uncons} may have multiple optimal solutions. Hence, Danskin's theorem does not directly apply. However, as we will show in Lemma~\ref{lemma: smoothness} in the supplementary, under the PL assumption, we still can show the following result
\[\nabla_{\btheta} g(\btheta) = \nabla_{\btheta} f(\btheta, \balpha^*) \quad \mbox{with} \quad \balpha^* \in \argmax_{\balpha \in \cSa} f(\btheta, \balpha),\]
despite the non-uniqueness of the optimal solution. 

Motivated by this result, we propose a Multi-step Gradient Descent Ascent algorithm that solves the inner maximization problem to ``approximate'' the gradient of the value function $g$. This gradient direction is then used to descent on $\btheta$. More specifically, the inner loop (Step~\ref{alg1:inner-loop}) in Algorithm~\ref{alg: alg_grad} solves the maximization problem~\eqref{eq:gEval_uncons} for a given fixed value $\btheta = \btheta_t$. The computed solution of this optimization problem provides an approximation for the gradient of the function $g(\btheta)$, see Lemma~\ref{lemma: conv_alpha_main} in Appendix~\ref{app: sec-3}. This gradient is then used in Step~\ref{alg1:update-theta} to descent on $\btheta$.

\begin{algorithm}
	\caption{Multi-step Gradient  Descent Ascent} 
	\label{alg: alg_grad}
	
	\begin{algorithmic}[1]
		\State INPUT:  $K$, $T$, $\eta_1 = 1/L_{22}$, $\eta_2 = 1/L$, $\balpha_0 \in \cSa$ and $\btheta_0 \in \cSt$
		\For  {$t=0, \cdots, T-1$} \label{alg1:outer-loop}
		\State Set $\balpha_0(\btheta_t) = \balpha_t$ 
		\For {$k=0, \cdots, K-1$} \label{alg1:inner-loop}
		\State Set $\balpha_{k+1}(\btheta_t) = \balpha_{k}(\btheta_t) + \eta_1 \nabla_{\balpha}f(\btheta_t, \balpha_{k}(\btheta_t))$\label{alg1:update-alpha}
		\EndFor
		\State Set $\btheta_{t+1} = \proj_{\cSt} \Big( \btheta_t-\eta_2 \nabla_{\btheta} f(\btheta_t, \balpha_{K}(\btheta_t))\Big)$\label{alg1:update-theta}
		\EndFor
		\State Return $(\btheta_t, \balpha_{K}(\btheta_t))$~for $t=0,\cdots,T-1$.
	\end{algorithmic}
\end{algorithm}

\subsection{Convergence analysis of Multi-Step Gradient Descent Ascent Algorithm for PL games}

Throughout this section, we make the following assumption.
\begin{assumption}\label{Assumption-cons-theta}
	The constraint set $\cSt$ is convex and compact. Moreover, there exists a ball with radius $R$,  denoted by ${\cal B}_R$, such that  $\cSt \subseteq {\cal B}_R$. 
\end{assumption}

We are now ready to state the main result of this section.

\begin{theorem}\label{thm:main}
	Under Assumptions~\ref{assumption: LipSmooth-uncons} and \ref{Assumption-cons-theta}, for any given scalar $\varepsilon \in (0,1)$, if we choose $K$ and $T$ large enough such that	
	\[T \geq N_T(\varepsilon) \triangleq {\cal O}(\varepsilon^{-2}) \quad {\rm and} \quad	K \geq N_K(\varepsilon) \triangleq {\cal O}(\log\big(\varepsilon^{-1})\big),\]
	then there exists an iteration $t\in \{0,\cdots, T\}$ such that $(\btheta_t,\balpha_{t+1})$ is an $\varepsilon$--FNE of \eqref{eq: game1-cons}.
\end{theorem}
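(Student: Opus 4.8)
The plan is to recast Algorithm~\ref{alg: alg_grad} as an \emph{inexact} projected gradient descent on the value function $g(\btheta) = \max_{\balpha} f(\btheta,\balpha)$, and to control the inexactness through the linear convergence of the inner loop. First I would invoke Lemma~\ref{lemma: smoothness} to establish that $g$ is $L$-smooth (with $L$ of the form $L_{11} + L_{12}^2/\mu$) and that $\nabla g(\btheta) = \nabla_{\btheta} f(\btheta,\balpha^*)$ for any $\balpha^* \in \argmax_{\balpha} f(\btheta,\balpha)$. This Danskin-type identity, which holds despite the non-uniqueness of the maximizer under the PL assumption, is exactly what lets me treat the outer iteration as a descent method on a single smooth function.

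Next, the inner loop is gradient descent on the $\mu$-PL function $h_{\btheta_t}(\cdot) = -f(\btheta_t,\cdot)$ with step $\eta_1 = 1/L_{22}$, so Lemma~\ref{lemma: conv_alpha_main} gives geometric decay $h_{\btheta_t}(\balpha_K(\btheta_t)) - h_{\btheta_t}^* \le (1-\mu/L_{22})^K\,(h_{\btheta_t}(\balpha_t) - h_{\btheta_t}^*)$. Using the quadratic-growth (error-bound) property implied by PL, this value gap controls the iterate distance $\|\balpha_K(\btheta_t) - \balpha^*(\btheta_t)\|^2 \le (2/\mu)(h_{\btheta_t}(\balpha_K)-h_{\btheta_t}^*)$, and hence by the cross-Lipschitz constant the gradient error $\|\nabla_{\btheta} f(\btheta_t,\balpha_K(\btheta_t)) - \nabla g(\btheta_t)\| \le L_{12}\,\|\balpha_K(\btheta_t) - \balpha^*(\btheta_t)\|$. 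Thus $K = \mathcal{O}(\log(\varepsilon^{-1}))$ inner steps drive this error below any prescribed $c\,\varepsilon$, \emph{provided} the initial gap $h_{\btheta_t}(\balpha_t)-h^*_{\btheta_t}$ stays bounded.

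Controlling that initial gap is where the warm start $\balpha_0(\btheta_t)=\balpha_t$ matters, and this is the main obstacle. Because $\btheta_{t+1}$ differs from $\btheta_t$ by at most $\eta_2$ times a bounded gradient, the maximizer $\balpha^*(\btheta_{t+1})$ moves only slightly, so the next inner problem starts from a gap equal to the previous (already shrunk) gap plus a controllable perturbation from the $\btheta$-update. I would formalize this through a single Lyapunov/potential function $\Phi_t = g(\btheta_t) + c\,(h_{\btheta_t}(\balpha_t) - h^*_{\btheta_t})$ and show it is monotonically decreasing, so that the outer optimality gap and the inner gap are kept small \emph{simultaneously}. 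The delicate point is to choose $K$ logarithmically large enough that the per-step gradient error is dominated by the descent gained from the gradient mapping, so the coupling between the two loops does not degrade the outer rate.

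Finally I would run the standard inexact-projected-gradient argument. The $L$-smoothness descent inequality for $g$, combined with the $\mathcal{O}(\varepsilon)$ gradient error, yields after telescoping over $t=0,\dots,T-1$ a bound of the form $\frac{1}{T}\sum_t \|\mathcal{G}_t\|^2 \le \mathcal{O}(1/T) + \mathcal{O}(\varepsilon^2)$, where $\mathcal{G}_t$ denotes the gradient mapping of the outer step. Taking $T = \mathcal{O}(\varepsilon^{-2})$ then forces some index $t$ with $\|\mathcal{G}_t\|$ small. For that $t$ I would translate a small gradient mapping together with the small gradient error into $\cX(\btheta_t,\balpha_{t+1}) \le \varepsilon$, while the linear inner convergence directly yields $\|\nabla_{\balpha} f(\btheta_t,\balpha_{t+1})\| \le \varepsilon$, which because $\cSa = \R^n$ is exactly the condition $\cY(\btheta_t,\balpha_{t+1}) \le \varepsilon$. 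Together these establish that $(\btheta_t,\balpha_{t+1})$ is an $\varepsilon$--FNE.
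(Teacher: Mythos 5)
Your proposal is correct and follows essentially the same route as the paper's proof: the Danskin-type identity and $L$-smoothness of $g$ (Lemma~\ref{lemma: smoothness}), the inner-loop gradient-error bound via PL linear convergence plus quadratic growth (Lemma~\ref{lemma: conv_alpha_main}), an inexact projected-gradient telescoping argument on $g$, and the observation that $\cY\leq\varepsilon$ reduces to $\|\nabla_{\balpha} f\|\leq\varepsilon$ because $\cSa=\R^n$. The only deviations are cosmetic: you absorb the gradient error quadratically (so $\|e_t\|=\mathcal{O}(\varepsilon)$ suffices) where the paper bounds $\langle e_t,\btheta_t-\btheta_{t+1}\rangle$ linearly against the diameter and therefore requires $\|e_t\|=\mathcal{O}(\varepsilon^2)$, and you control the warm-start gap with a Lyapunov function where the paper invokes the stability Lemma~\ref{lemma: stability} together with compactness of $\cSt$ to assert a uniform bound $\Delta$ --- neither change affects the structure of the argument or the resulting $\mathcal{O}(\varepsilon^{-2})$, $\mathcal{O}(\log(\varepsilon^{-1}))$ complexities.
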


\begin{proof}
	The proof is relegated to Appendix~\ref{App: main}.
\end{proof}

\begin{corollary}
	Under Assumption~\ref{assumption: LipSmooth-uncons} and Assumption~\ref{Assumption-cons-theta}, Algorithm~\ref{alg: alg_grad} finds an $\varepsilon$-FNE of the game~\eqref{eq: game1-cons} with ${\cal O}(\varepsilon^{-2})$  gradient evaluations of the objective with respect to $\btheta$ and  ${\cal O}(\varepsilon^{-2}\log(\varepsilon^{-1}))$ gradient evaluations with respect to $\balpha$. If the two gradient oracles have the same complexity, the overall complexity of the method would be ${\cal O}(\varepsilon^{-2}\log(\varepsilon^{-1}))$. 
\end{corollary}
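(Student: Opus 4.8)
The plan is to read Algorithm~\ref{alg: alg_grad} as an \emph{inexact} projected gradient descent on the value function $g(\btheta) = \max_{\balpha} f(\btheta,\balpha)$, and to control the inexactness through the linear convergence of the inner loop. Two structural facts about $g$ drive everything. By the PL assumption (Lemma~\ref{lemma: smoothness}), $g$ is $L$-smooth for an explicit $L$ (of the form $L_{11}+L_{12}^2/\mu$), and its gradient satisfies $\nabla_{\btheta} g(\btheta) = \nabla_{\btheta} f(\btheta,\balpha^*)$ for \emph{every} maximizer $\balpha^* \in \argmax_{\balpha} f(\btheta,\balpha)$. The ``every maximizer'' part is essential: it lets me bound the gradient error using the maximizer \emph{nearest} to the current inner iterate rather than a fixed one, which is exactly what the contraction estimates will deliver. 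I would therefore set $\eta_2 = 1/L$ to match the algorithm's step size.

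Next I would analyze the inner loop. Since $h_{\btheta_t}(\balpha) = -f(\btheta_t,\balpha)$ is $\mu$-PL and $L_{22}$-smooth, gradient ascent with $\eta_1 = 1/L_{22}$ contracts the suboptimality geometrically, $h_{\btheta_t}(\balpha_K) - h^*_{\btheta_t} \leq (1-\mu/L_{22})^K\big(h_{\btheta_t}(\balpha_0) - h^*_{\btheta_t}\big)$, and I extract two consequences. First, the smoothness inequality $\|\nabla h\|^2 \leq 2L_{22}(h-h^*)$ makes $\|\nabla_{\balpha} f(\btheta_t,\balpha_K)\|$ decay geometrically in $K$, which will directly bound the $\cY$ measure. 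Second, since PL implies quadratic growth $\tfrac{\mu}{2}\dist(\balpha,\mathcal{A}^*)^2 \leq h - h^*$, the distance $\|\balpha_K - \balpha^*\|$ to the nearest maximizer also decays geometrically, so the gradient error $e_t \triangleq \nabla_{\btheta} f(\btheta_t,\balpha_K) - \nabla_{\btheta} g(\btheta_t)$ obeys $\|e_t\| \leq L_{12}\|\balpha_K - \balpha^*\|$ and is made as small as desired with $K = \mathcal{O}(\log \varepsilon^{-1})$ (this is the content behind Lemma~\ref{lemma: conv_alpha_main}).

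I would then run the standard descent-lemma argument for inexact projected gradient descent on the $L$-smooth $g$. Writing $G_t = (\btheta_t - \btheta_{t+1})/\eta_2$ for the gradient mapping, the prox-gradient inequality gives $g(\btheta_{t+1}) \leq g(\btheta_t) - c\|G_t\|^2 + c'\|e_t\|^2$; summing over $t=0,\dots,T-1$ and using that $g$ is bounded on the compact $\cSt$ yields $\min_t \|G_t\|^2 \leq \mathcal{O}(1/T) + \mathcal{O}(\max_t\|e_t\|^2)$. Choosing $K$ so the error term is $\mathcal{O}(\varepsilon^2)$ and $T = \mathcal{O}(\varepsilon^{-2})$ forces some index $t^*$ with $\|G_{t^*}\| \leq \varepsilon$. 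The final translation to FNE at $(\btheta_{t^*},\balpha_{t^*+1})$ is then clean: the $\cY$-measure equals $\|\nabla_{\balpha} f(\btheta_{t^*},\balpha_{t^*+1})\|$, already controlled in the inner-loop step; and because $\cX$ is evaluated at the \emph{same} approximate gradient $\nabla_{\btheta} f(\btheta_{t^*},\balpha_{t^*+1})$ that generated $\btheta_{t^*+1}$, the projection (non-expansiveness) inequality gives $\cX(\btheta_{t^*},\balpha_{t^*+1}) \leq C\|G_{t^*}\|$ with no $e_t$ term, using compactness of $\cSt$ to bound the remaining inner products.

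The step I expect to be the main obstacle is making the inner-loop error \emph{uniform} across all outer iterations with only $K = \mathcal{O}(\log\varepsilon^{-1})$ inner steps. The contraction factor multiplies the \emph{initial} suboptimality $h_{\btheta_t}(\balpha_t) - h^*_{\btheta_t}$, and $\balpha_t$ is a warm start inherited from $\btheta_{t-1}$, so the two loops are coupled. I would close this with a recursion: passing from $\btheta_{t-1}$ to $\btheta_t$ changes both $h_{\btheta}(\balpha)$ and $h^*_{\btheta}$ by at most $\mathcal{O}(\|\btheta_t - \btheta_{t-1}\|)$ (Lipschitzness of $f$ in $\btheta$ on the relevant bounded region, via compactness of $\cSt$ and boundedness of the $\balpha_t$), while one inner pass contracts the suboptimality by $(1-\mu/L_{22})^K \leq \tfrac12$; this gives a bounded recursion and hence a uniform $\Delta_{\max}$, and thus a uniform geometric bound on $\|e_t\|$. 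Establishing the boundedness of the $\balpha_t$ iterates needed to start this recursion is the delicate part and would likely require a short separate induction.
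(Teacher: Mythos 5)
Your proposal is correct and follows essentially the same route as the paper: a Danskin-type lemma showing $g$ is smooth with $\nabla g(\btheta)=\nabla_{\btheta} f(\btheta,\balpha^*)$ for \emph{any} maximizer, linear convergence of the inner ascent loop combined with PL-implies-quadratic-growth to make the gradient error $e_t$ geometrically small in $K$, an inexact projected-gradient descent argument on $g$ yielding $T=\mathcal{O}(\varepsilon^{-2})$ and $K=\mathcal{O}(\log(\varepsilon^{-1}))$, and hence the stated $\mathcal{O}(\varepsilon^{-2})$ and $\mathcal{O}(\varepsilon^{-2}\log(\varepsilon^{-1}))$ gradient counts. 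The only minor divergence is your treatment of the uniform warm-start bound $\Delta$ (a Lipschitz-plus-contraction recursion requiring bounded $\balpha$ iterates), where the paper instead invokes its stability lemma for the argmax set together with compactness of $\cSt$; both serve the same purpose and are left at a comparable level of detail.
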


\begin{remark}
    The iteration complexity order $\mathcal{O}(\varepsilon^{-2}\log(\varepsilon^{-1}))$ in Theorem~\ref{thm:main} is tight (up to logarithmic factors). 
    This is due to the fact that for general non-convex smooth problems, finding an $\varepsilon$--stationary solution requires at least $\Omega(\varepsilon^{-2})$ gradient evaluations~\cite{carmon2017lower,nesterov2013introductory}. Clearly, this lower bound is also valid for finding an $\varepsilon$--FNE of PL-games. This is because we can assume that the function $f(\btheta,\balpha)$ does not depend on $\balpha$ (and thus PL in $\balpha$).
\end{remark}
\begin{remark}
	Theorem~\ref{thm:main} shows that under the PL assumption, the pair $(\btheta_t, \balpha_K(\theta_t))$ computed by Algorithm~\ref{alg: alg_grad} is an $\varepsilon$--FNE of the game~\eqref{eq: game1-cons}. Since $\balpha_K(\btheta_t)$ is an approximate solution of the inner maximization problem, we get that $\btheta_t$ is concurrently an $\varepsilon$--first order stationary solution of the optimization problem~\eqref{eq: game_min_uncons}. 
\end{remark}


\begin{remark}
	In \cite[Theorem 4.2]{sanjabi2018convergence}, a similar result was shown for the case when $f(\btheta,\balpha)$ is strongly concave in $\balpha$. Hence,  Theorem~\ref{thm:main} can be viewed as an extension of  \cite[Theorem 4.2]{sanjabi2018convergence}. Similar to \cite[Theorem 4.2]{sanjabi2018convergence}, one can easily extend the result of Theorem~\ref{thm:main} to the stochastic setting by replacing the gradient of $f$ with respect to $\theta$ in Step~\ref{alg1:update-theta} by the stochastic version of the gradient.
\end{remark}


In the next section we consider the non-convex concave min-max saddle game. It is well-known that convexity/concavity does not imply the PL condition and PL condition does not imply convexity/concavity~\cite{PL_karimi_2016}. Therefore, the problems we consider in the next section are neither restriction nor extension of our results  on PL games.

\section{Non-Convex Concave Games} \label{section:Non-Convex-Concave}
In this section, we focus on ``non-convex concave" games satisfying the following assumption:

\begin{assumption}\label{assumption:Concavity}
The objective function $f(\btheta,\balpha)$ is concave in $\balpha$ for any fixed value of $\btheta$. Moreover, the set $\cSa$ is convex and compact, and there exists a ball with radius $R$ that contains the feasible set~$\cSa$.
\end{assumption}

One major difference of this case with the PL-games is that in this case the function  $g(\btheta) = \max_{\balpha \in \cSa} \, \, f(\btheta, \balpha)$ might not be differentiable.
To see this, consider the example $ g(\alpha) =  \max_{0 \leq \alpha \leq 1} (2\alpha -1)\theta$ which is concave in $\alpha$. However, the value function~$g(\theta) = |\theta|$ is non-smooth.

Using a small regularization term, we approximate the function $g(\cdot)$ by a differentiable function
\begin{equation}\label{eq:g-lambda}
\gl(\btheta)\triangleq \max_{\balpha \in \cSa} \, \, \fl(\btheta, \balpha),
\end{equation}
where
$
\fl(\btheta, \balpha) \triangleq f(\btheta, \balpha) - \dfrac{\lambda}{2} \|\balpha - \bar{\balpha}\|^2.
$
Here $\bbalpha \in \cSa$ is some given fixed point and $\lambda>0$ is a regularization parameter that we will specify later. 
Since $f(\btheta, \balpha)$ is concave in~$\balpha$,
$\fl(\btheta, \cdot)$ is $\lambda$-strongly concave. Thus, the function $g_{\lambda}(\cdot)$ becomes smooth with Lipschitz gradient; see Lemma~\ref{lm:g-smooth} in the supplementary. Using this property, we propose an algorithm that runs at each iteration multiple steps of Nesterov accelerated projected gradient ascent to estimate the solution of~\eqref{eq:g-lambda}. This solution is then used to estimate the gradient of~$g_{\lambda} (\btheta)$ which directly provides a descent direction on $\btheta$. Our algorithm computes an $\varepsilon$--FNE point for non-convex concave games with $\widetilde{{\cal O}}(\varepsilon^{-3.5})$  gradient evaluations. 
Then for sufficiently small regularization coefficient, we show that the computed point is an $\varepsilon$-FNE. 




Notice that since $f_{\lambda}$ is Lipschitz smooth and based on the compactness assumption, we can define 
\begin{equation}\label{def:g_max}
\begin{array}{l}
g_{\btheta} \triangleq \max_{\btheta \in \cSt} \|\nabla \gl(\btheta)\|, \;\;  g_{\balpha} \triangleq \max_{\btheta \in \cSt} \|\nabla_{\balpha} \fl(\btheta, \balpha^*(\btheta) )\|, \;\; {\rm and} \;\; g_{max}=\max\{g_{\btheta}, g_{\balpha},1\},
\end{array}
\end{equation} 
where $\balpha^*(\btheta) \triangleq \argmax_{\balpha \in \cSa} \; \fl(\btheta, \balpha)$. We are now ready to describe our proposed algorithm.

\subsection{Algorithm Description}
Our proposed method is outlined in Algorithm~\ref{alg}. This algorithm has two steps: step~\ref{alg-2: update-alpha} and step~\ref{alg-2: update-theta}. In step~\ref{alg-2: update-alpha}, $K$ steps of accelerated gradient ascent method is run over the variable $\balpha$ to find an approximate maximizer of the problem $\max_{\balpha}\; f_{\lambda}(\btheta_t,\balpha)$. Then using approximate maximizer~$\balpha_{t+1}$, we update $\btheta$ variable using one step of first order methods in step~\ref{alg-2: update-theta}.

\begin{algorithm}[ht]
	\caption{Multi-Step Frank Wolfe/Projected Gradient Step Framework}\label{alg}
	\textbf{Require:} Constants $\widetilde{L}\triangleq \max\{L,L_{12}, g_{max}\}$, $N\triangleq \lfloor \sqrt{8L_{22}/\lambda}\rfloor$, $K$, $T$, $\eta$, $\lambda$, $\btheta_0 \in \cSt$,  $\balpha_0 \in \cSa$
	
	\hrulefill
	
	\begin{algorithmic}[1]
		\For{$t=0, 1, 2, \ldots, T$} \label{alg-2: outer-loop}
		\vspace{0.2cm}
		\State Set $\balpha_{t+1} = \textrm{APGA}(\balpha_t, \btheta_t, \eta,N,K)$  by running $K$ steps of  Accelerated Projected Gradient Ascent subroutine (Algorithm~\ref{alg-APGF}) with periodic restart at every $N$ iteration. \label{alg-2: update-alpha}
		\vspace{0.3cm}
		\State Compute $\btheta_{t+1}$ using first-order information (Frank-Wolfe or projected gradient descent). \label{alg-2: update-theta}
		\vspace{0.2cm}
		\EndFor
	\end{algorithmic}
\end{algorithm}

	
	

In step~\ref{alg-2: update-alpha}, we run $K$ step of accelerated gradient ascent algorithm over the variable $\alpha$ with restart every $N$ iterations. The details of this subroutine can be found  in subsection~\ref{sec.APGA} of the supplementary materials. 
In step~\ref{alg-2: update-theta} of Algorithm~\ref{alg}, we can either use projected gradient descent update rule
\[\btheta_{t+1} \triangleq \mbox{proj}_{\cSt} \Big(\btheta_t - \dfrac{1}{L_{11} + L_{12}^2/\lambda}\nabla_{\btheta} \fl \big(\btheta_t , \balpha_{t+1}\big)\Big),\]
or Frank-Wolfe update rule described in subsection~\ref{sec.FW} in the supplementary material. We show convergence of the algorithm to $\varepsilon$--FNE in Theorems~\ref{thm: FW}.

\begin{theorem}\label{thm: FW}
	Given a scalar $\varepsilon \in (0,1)$. Assume that Step~7 in Algorithm~\ref{alg} sets either runs projected gradient descent or Frank-Wolfe iteration. 
	Under Assumptions~\ref{assumption:Concavity} and \ref{assumption: LipSmooth-uncons},
	\[\eta = \dfrac{1}{L_{22}}, \quad  \lambda \triangleq \dfrac{\varepsilon}{4R}, \quad T \geq N_T(\varepsilon) \triangleq {\cal O} (\varepsilon^{-3}), \quad  {\rm and} \quad 	K \geq N_K(\varepsilon) \triangleq {\cal O}\big(\varepsilon^{-1/2} \log(\varepsilon^{-1})\big), \]
	then there exists $t \in \{0, \ldots, T\}$ such that $(\btheta_t, \balpha_{t+1})$ is an $\varepsilon$--FNE of problem~\eqref{eq: game1-cons}.
\end{theorem}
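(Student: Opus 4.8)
The plan is to view Algorithm~\ref{alg} as \emph{inexact} projected gradient descent (or Frank--Wolfe) on the smooth surrogate $\gl$, and then to argue that approximate stationarity of $\gl$, combined with the smallness of the regularization parameter $\lambda$, delivers an $\varepsilon$--FNE of the original game. First I would invoke the smoothness of $\gl$ (Lemma~\ref{lm:g-smooth}): since $\fl(\btheta,\cdot)$ is $\lambda$-strongly concave, the inner maximizer $\balpha^*(\btheta)$ is unique, Danskin's theorem applies, and $\nabla \gl(\btheta) = \nabla_{\btheta} \fl(\btheta, \balpha^*(\btheta)) = \nabla_{\btheta} f(\btheta, \balpha^*(\btheta))$, with $\gl$ having Lipschitz gradient of modulus $L_g = L_{11} + L_{12}^2/\lambda = {\cal O}(1/\lambda)$. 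This $1/\lambda$ blow-up is precisely the source of the extra $\varepsilon^{-1}$ factor in the outer iteration count.

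Next I would quantify the inner-loop accuracy. Running $K$ steps of APGA on the $\lambda$-strongly concave, $L_{22}$-smooth problem $\max_{\balpha} \fl(\btheta_t,\balpha)$ yields linear convergence with contraction factor $(1-\sqrt{\lambda/L_{22}})$, so $\|\balpha_{t+1} - \balpha^*(\btheta_t)\| \leq \delta$ after $K = {\cal O}(\sqrt{L_{22}/\lambda}\,\log(1/\delta))$ iterations (Lemma~\ref{lemma: conv_alpha_main}); with $\lambda = \varepsilon/(4R)$ this is exactly $K = {\cal O}(\varepsilon^{-1/2}\log(\varepsilon^{-1}))$ once $\delta$ is taken polynomial in $\varepsilon$. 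The $L_{12}$-Lipschitzness of $\nabla_{\btheta} f$ in $\balpha$ then controls the gradient error $\|\nabla_{\btheta} \fl(\btheta_t,\balpha_{t+1}) - \nabla\gl(\btheta_t)\| \leq L_{12}\delta$, so Step~\ref{alg-2: update-theta} is a genuine descent step on $\gl$ driven by an approximate gradient. Applying the descent lemma to $\gl$ with stepsize $1/L_g$ gives $\gl(\btheta_{t+1}) \leq \gl(\btheta_t) - c\,\|G_t\|^2 + {\cal O}(\delta)$, where $G_t$ is the projected gradient mapping (or Frank--Wolfe surrogate gap). Telescoping over $t=0,\dots,T$ and using that $\gl$ is bounded on the compact $\cSt$ yields $\min_t \|G_t\|^2 \leq {\cal O}(L_g \Delta / T) + {\cal O}(\delta)$; demanding $\|G_t\| \leq \varepsilon$ then forces $T = {\cal O}(L_g \varepsilon^{-2}) = {\cal O}(\lambda^{-1}\varepsilon^{-2}) = {\cal O}(\varepsilon^{-3})$, matching the statement.

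Finally I would translate approximate stationarity of $\gl$ into the two FNE measures. For the min-player measure $\cX(\btheta_t,\balpha_{t+1})$, since $\nabla\gl(\btheta_t)=\nabla_{\btheta} f(\btheta_t,\balpha^*(\btheta_t))$, a gradient mapping $\|G_t\|$ of order $\varepsilon$ together with the $L_{12}\delta$ gradient error bounds $\cX$ by $\varepsilon$. For the max-player measure $\cY$, the exact maximizer $\balpha^*(\btheta_t)$ satisfies $\langle \nabla_{\balpha} \fl(\btheta_t,\balpha^*(\btheta_t)), \balpha - \balpha^*(\btheta_t)\rangle \leq 0$ on $\cSa$; since $\nabla_{\balpha} \fl = \nabla_{\balpha} f - \lambda(\balpha - \bbalpha)$ and $\|\balpha-\bbalpha\|\leq 2R$, the regularization contributes at most $\lambda\cdot 2R = \varepsilon/2$ to $\cY$, while the inner inaccuracy $\delta$ (scaled by $L_{22}$) contributes the remainder, giving $\cY \leq \varepsilon$.

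The main obstacle, I expect, is the bookkeeping in this last step: one must choose $\delta$ (hence $K$) and balance three distinct error sources---the regularization bias $\lambda\cdot 2R$, the inner-loop accuracy $\delta$, and the outer stationarity tolerance $\|G_t\|$---so that all of them land below $\varepsilon$ \emph{simultaneously} under the prescribed $\lambda = \varepsilon/(4R)$, $T={\cal O}(\varepsilon^{-3})$, and $K = {\cal O}(\varepsilon^{-1/2}\log(\varepsilon^{-1}))$. In particular, because $\gl$ is only $(1/\lambda)$-smooth, one must verify that the $\delta$-dependent terms entering the telescoped bound are not inflated by $L_g$ beyond $\varepsilon$, which is what pins down how small $\delta$ (and thus the logarithmic factor in $K$) must be.
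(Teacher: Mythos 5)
Your proposal follows essentially the same route as the paper's own proof: inexact projected gradient descent/Frank--Wolfe on the regularized value function $\gl$ (smooth with modulus $O(1/\lambda)$ by Lemma~\ref{lm:g-smooth}), linear inner-loop convergence of APGA giving $K = {\cal O}(\sqrt{L_{22}/\lambda}\,\log(1/\varepsilon))$, a telescoped descent bound forcing $T = {\cal O}(\lambda^{-1}\varepsilon^{-2}) = {\cal O}(\varepsilon^{-3})$, and the final split of the max-player measure $\cY$ into the inner-accuracy term plus the regularization bias $2\lambda R = \varepsilon/2$, exactly as in equations~\eqref{eq:FW-2}--\eqref{eq: FW-3} and \eqref{eq: PGD-5}--\eqref{eq: PGD-6}. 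The only slip is a citation: the inner-loop accuracy result you need here is Lemma~\ref{lm:Updating-alpha}, not Lemma~\ref{lemma: conv_alpha_main} (which is the PL-game analogue), but the content you describe matches the correct lemma.
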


\begin{proof}
	The proof is relegated to Appendix~\ref{App: FW}.  
\end{proof}


\begin{corollary}\label{corollary: eps^{-3.5}}
	Under Assumptions~\ref{assumption: LipSmooth-uncons} and \ref{assumption:Concavity}, Algorithm~\ref{alg} finds an $\varepsilon$-first-order stationary solution of the game~\eqref{eq: game1-cons} with ${\cal O}(\varepsilon^{-3})$  gradient evaluations of the objective with respect $\btheta$ and  ${\cal O}(\varepsilon^{-0.5}\log(\varepsilon^{-1}))$ gradient evaluations with respect to $\balpha$. If the two oracles have the same complexity, the overall complexity of the method would be ${\cal O}(\varepsilon^{-3.5}\log(\varepsilon^{-1}))$.
\end{corollary}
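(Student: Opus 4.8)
The plan is to treat Corollary~\ref{corollary: eps^{-3.5}} as a direct accounting consequence of Theorem~\ref{thm: FW}: the theorem already supplies \emph{correctness}, namely that with $\eta = 1/L_{22}$, $\lambda = \varepsilon/(4R)$, $T \geq N_T(\varepsilon) = \bO(\varepsilon^{-3})$ and $K \geq N_K(\varepsilon) = \bO(\varepsilon^{-1/2}\log(\varepsilon^{-1}))$, some iterate $(\btheta_t,\balpha_{t+1})$ of Algorithm~\ref{alg} is an $\varepsilon$--FNE of \eqref{eq: game1-cons}. What remains is purely to tally the number of first-order oracle calls incurred while running these $T$ outer iterations, separating them according to the variable with respect to which the gradient is taken.

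First I would account for a single outer iteration $t$. Step~\ref{alg-2: update-alpha} invokes the APGA subroutine for exactly $K$ accelerated projected-gradient-ascent steps on $\fl(\btheta_t,\cdot)$; each such step queries $\nabla_{\balpha}f$ once (the periodic restart every $N$ iterations merely resets the momentum and consumes no additional gradient), so this step costs exactly $K$ evaluations of $\nabla_{\balpha}f$. Step~\ref{alg-2: update-theta} performs one projected-gradient-descent (or Frank--Wolfe) update, which evaluates $\nabla_{\btheta}\fl(\btheta_t,\balpha_{t+1})$ once; since the regularizer $\tfrac{\lambda}{2}\|\balpha-\bar{\balpha}\|^2$ does not depend on $\btheta$, this equals $\nabla_{\btheta}f(\btheta_t,\balpha_{t+1})$, i.e. one evaluation of $\nabla_{\btheta}f$ (the Frank--Wolfe variant additionally solves one linear-minimization subproblem over $\cSt$, which is not a gradient call). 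Hence each outer iteration costs one $\btheta$-gradient and $K$ $\balpha$-gradients.

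Summing over the $T = \bO(\varepsilon^{-3})$ outer iterations then yields $T = \bO(\varepsilon^{-3})$ evaluations of $\nabla_{\btheta}f$ and $T\cdot K = \bO(\varepsilon^{-3})\cdot\bO(\varepsilon^{-1/2}\log(\varepsilon^{-1})) = \bO(\varepsilon^{-3.5}\log(\varepsilon^{-1}))$ evaluations of $\nabla_{\balpha}f$. When both oracles have the same per-call cost, the overall count is the sum (equivalently the maximum) of these two totals, namely $\bO(\varepsilon^{-3.5}\log(\varepsilon^{-1}))$, as claimed.

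I expect the genuine obstacle to lie entirely inside Theorem~\ref{thm: FW} rather than in this counting step: the delicate balance is the choice $\lambda = \varepsilon/(4R)$, forced small enough that the bias the strongly-concave regularizer introduces into the $\cY$-stationarity measure stays below $\varepsilon$, yet which simultaneously inflates the smoothness modulus of $\gl$ to order $1/\lambda = \bO(\varepsilon^{-1})$ and thereby drives the outer complexity up to $T = \bO(\varepsilon^{-3})$; the same $\lambda$ renders the inner problem $\lambda$-strongly concave with condition number $L_{22}/\lambda = \bO(\varepsilon^{-1})$, which is precisely why acceleration is needed in Step~\ref{alg-2: update-alpha} to keep $K = \bO(\varepsilon^{-1/2}\log(\varepsilon^{-1}))$ rather than $\bO(\varepsilon^{-1}\log(\varepsilon^{-1}))$. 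Within the corollary itself the only points requiring care are confirming that each accelerated inner step (resp.\ each restart) consumes a single (resp.\ zero) $\balpha$-gradient, and noting that identifying the good index $t\in\{0,\dots,T\}$ via the measures $\cX,\cY$ contributes only lower-order bookkeeping that is absorbed into the constants.
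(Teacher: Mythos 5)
Your accounting is correct and matches exactly how the paper treats this corollary: it is stated without an explicit proof precisely because it is the immediate tally of Theorem~\ref{thm: FW}'s iteration bounds (one $\nabla_{\btheta}f$ evaluation and $K$ evaluations of $\nabla_{\balpha}f_{\lambda}$ per outer iteration, hence $T = \bO(\varepsilon^{-3})$ and $T\cdot K = \bO(\varepsilon^{-3.5}\log(\varepsilon^{-1}))$ calls respectively), with all the real difficulty residing in the theorem itself, as you note. Your reading also correctly resolves the paper's slightly loose phrasing, since the stated $\bO(\varepsilon^{-0.5}\log(\varepsilon^{-1}))$ count for $\balpha$ must be per outer iteration for the overall $\bO(\varepsilon^{-3.5}\log(\varepsilon^{-1}))$ figure to be consistent.
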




\section{Numerical Results} \label{section:Numearical Results}
We evaluate the numerical performance of Algorithm~\ref{alg} in the following two applications:
\subsection{Fair Classifier}
We conduct two experiment on the Fashion MNIST dataset~\cite{xiao2017/online}. This dataset consists of $28\times 28$ arrays of grayscale pixel images classified into 10 categories of clothing. It includes $60,000$ training images and $10,000$ testing images.

\textbf{Experimental Setup:} The recent work in~\cite{1902.00146} observed that training a logisitic regression model to classify the images of the Fashion MNIST dataset can be biased against certain categories. 
To remove this bias, \cite{1902.00146} proposed to minimize the maximum loss incurred by the different categories. We repeat the experiment when using a more complex non-convex Convolutional Neural Network (CNN) model for classification. Similar to~\cite{1902.00146}, we limit our experiment to the three categories T-shirt/top, Coat, and Shirts, that correspond to the lowest three testing accuracies achieved by the trained classifier. To minimize the maximum loss over these three categories, we train the classifier to minimize
\begin{equation}\label{exp-non-reg}
\min_{\mathbf{W}}\,\, \max  \,\{ {\cal L}_1(\mathbf{W}),  {\cal L}_2(\mathbf{W}),  {\cal L}_3(\mathbf{W})\},
\end{equation}
where $\mathbf{W}$ represents the parameters of the CNN; and ${\cal L}_1$, ${\cal L}_2$, and ${\cal L}_3$ correspond to the loss incurred by samples in T-shirt/top, Coat, and Shirt categories. Problem~\eqref{exp-non-reg} can be re-written as
\[\min_{\mathbf{W}}\, \max_{t_1,t_2,t_3} \quad \sum_{i=1}^3 \,t_i {\cal L}_i(\mathbf{W})  \quad  
\mbox{s.t.} \quad  t_i \geq 0 \quad \forall \, i =1,2 , 3; \quad 
\sum_{i=1}^3 t_i = 1.\]
Clearly the inner maximization problem is concave; and thus our theory can be applied. To empirically evaluate the regularization scheme proposed in Section~\ref{section:Non-Convex-Concave}, we implement two versions of Algorithm~\ref{alg}. The first version solves at each iteration the regularized strongly concave sub-problem
\begin{equation}\label{finite-max-reg}
 \max_{t_1,t_2,t_3} \quad \sum_{i=1}^3 \,t_i {\cal L}_i(\mathbf{W}) - \dfrac{\lambda}{2}\sum_{i=1}^3 t_i^2 \quad 
\mbox{s.t.} \quad  t_i \geq 0 \quad \forall \, i =1,2 , 3; \quad
\sum_{i=1}^3 t_i = 1,
\end{equation}
and use the optimum $t$ to perform a gradient descent step on $\mathbf{W}$ (notice that fixing the value of $\mathbf{W}$, the optimum $t$ can be computed using KKT conditions and a simple sorting or bisection procedure). The second version of Algorithm~\ref{alg} solves at each iteration the concave inner maximization problem without the regularization term.  Then uses the computed solution to perform a descent step on $\mathbf{W}$.  Notice that in both cases, the optimization with respect to $t$ variable can be done in (almost) closed-form update. Although regularization is required to have theoretical convergence guarantees, we compare the two versions of the algorithm on empirical data to determine whether we lose by adding such regularization. We further compare these two algorithms with normal training that uses gradient descent to minimize the average loss among the three categories. We run all algorithms for $5500$ epochs and record the test accuracy of the categories. To reduce the effect of random initialization, we run our methods with $50$ different random initializations and record the average and standard deviation of the test accuracy collected. For fair comparison,  the same initialization is used for all methods in each run. The results are summarized in Tables~\ref{tab:tab-numerical-results1}. To test our framework in stochastic settings, we repeat the experiment running all algorithms for $12,000$ iterations with Adam and SGD optimizer with a bath size of $600$ images ($200$ from each category). The results of the second experiment with Adam optimizer are summarized in Table~\ref{tab:tab-numerical-results2}. The model architecture and parameters are detailed in Appendix~\ref{App: network}. The choice of Adam optimizer is mainly because it is more robust to the choice of the step-size and thus can be easily tuned. In fact, the use of SGD or Adam does not change the overall takeaways of the experiments. The results of using SGD optimizer are relegated to Appendix~\ref{App: SGD_results}.

\textbf{Results:} 
Tables~\ref{tab:tab-numerical-results1} and~\ref{tab:tab-numerical-results2} show the average and standard deviation of the number of correctly classified samples. 
The average and standard deviation are taken over 50 runs. For each run 1000 testing samples are considered for each category. The results show that when using MinMax and MinMax with regularization, the accuracies across the different categories are more balanced compared to normal training. Moreover, the tables show that Algorithm~\ref{alg} with regularization provides a slightly better worst-case performance compared to the unregularized approach. 
Note that the empirical advantages due to regularization appears more in the stochastic setting. To see this compare the differences between MinMax and MinMax with Regularization in Tables~\ref{tab:tab-numerical-results1}~and~\ref{tab:tab-numerical-results2}. Figure~\ref{fig:loss} depicts a sample trajectory of deterministic algorithm applied to the regularized and regularized formulations. This figures shows that regularization provides a smoother and slightly faster convergence compared to the unregularized approach. In addition, we apply our algorithm to the exact similar logistic regression setup as in~\cite{1902.00146}. Results of this experiment can be found in Appendix~\ref{App:logistic_results}.

\begin{figure}[]
    \centering
    \includegraphics[scale=0.3]{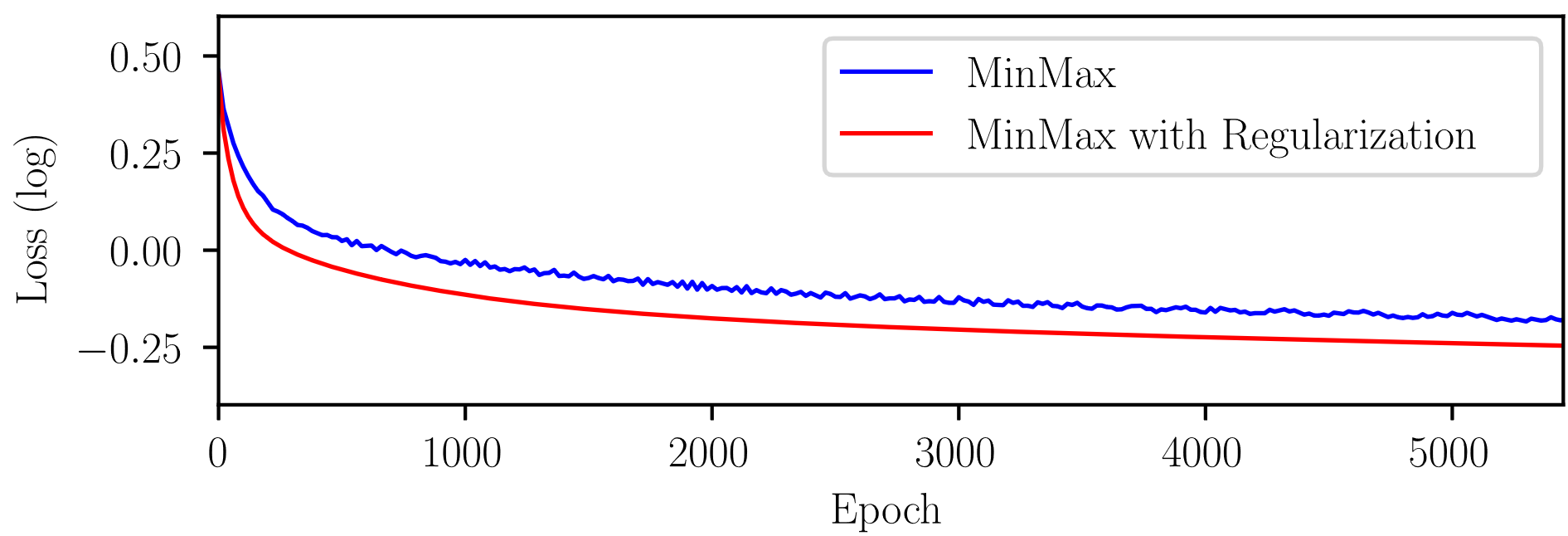}
    \caption{The effect of regularization on the convergence of the training loss, $\lambda = 0.1$.}
    \label{fig:loss}
\end{figure}
\begin{table}[h]
\centering
\resizebox{\textwidth}{!}{%
\begin{tabular}{ccccccccc}
\toprule
& \multicolumn{2}{c}{T-shirt/top} & \multicolumn{2}{c}{Coat} & \multicolumn{2}{c}{Shirt} & \multicolumn{2}{c}{Worst} \\
          & mean   & std     & mean   & std    & mean  & std   & mean & std \\ \midrule
Normal   & 850.72    & 8.58      & 843.50    & 17.24     & 658.74   & 17.81    & 658.74  & 17.81   \\ \midrule
MinMax   & 774.14    & 10.40     & 753.88    & 22.52     & 766.14   & 13.59    & 750.04  & 18.92   \\ \midrule
MinMax with Regularization & 779.84  & 10.53   & 765.56 & 22.28    & 762.34  & 11.91 & \bf{755.66}  & \bf{15.11}   \\ \bottomrule
\end{tabular}
}
\caption{The mean and standard deviation of the number of correctly classified samples when gradient descent is used in training, $\lambda=0.1$.}

\label{tab:tab-numerical-results1}
\end{table}

\begin{table}[h]
\centering
\resizebox{\textwidth}{!}{%
\begin{tabular}{ccccccccc}
\toprule
& \multicolumn{2}{c}{T-shirt/top} & \multicolumn{2}{c}{Coat} & \multicolumn{2}{c}{Shirt} & \multicolumn{2}{c}{Worst} \\
        & mean  & std    & mean  & std    & mean & std   & mean & std   \\ \midrule
Normal  & 853.86   & 10.04     & 852.22   & 18.27     & 683.32  & 17.96    & 683.32  & 17.96    \\ \midrule
MinMax  & 753.44   & 15.12     & 715.24   & 32.00     & 733.42  & 18.51    & 711.64  & 29.02    \\ \midrule
MinMax with Regularization & 764.02  & 14.12   & 739.80 & 27.60   & 748.84  & 15.79    & \bf{734.34}  & \bf{23.54}    \\ \bottomrule
\end{tabular}
}
\caption{The mean and standard deviation of the number of correctly classified samples when Adam (mini-batch) is used in training, $\lambda=0.1$.}
\label{tab:tab-numerical-results2}
\end{table}


\subsection{Robust Neural Network Training}
\textbf{Experimental Setup:} Neural networks have been widely used in various applications, especially in the field of image recognition. However, these neural networks are vulnerable to adversarial attacks, such as Fast Gradient Sign Method (FGSM)~\cite{FGSM} and Projected Gradient Descent (PGD) attack~\cite{PGD}. These adversarial attacks show that a small perturbation in the data input can significantly change the output of a neural network. To train a robust neural network  against adversarial attacks, researchers reformulate the training procedure into a robust min-max optimization formulation~\cite{madry2018}, such as
$$
\min_{\mathbf{w}}\, \; \sum_{i=1}^{N}\;\max_{\delta_i,\;\text{s.t.}\;|\delta_i|_{\infty}\leq \varepsilon}   {\ell}(f(x_i+\delta_i;\mathbf{w}), y_i).\label{eq: Madry2}
$$
Here $\mathbf{w}$ is the parameter of the neural network, the pair $(x_i,y_i)$ denotes the $i$-th data point, and $\delta_i$ is the perturbation added to data point~$i$.  
As discussed in this paper, solving such a non-convex non-concave min-max optimization problem is computationally challenging. Motivated by the theory developed in this work, we approximate the above optimization problem with a novel objective function which is concave in the parameters of the (inner) maximization player. 
To do so, we first approximate the inner maximization problem with a finite max problem
\begin{equation} \label{eq:adversaryxhatformulation}
\min_{\mathbf{w}}\, \; \sum_{i=1}^{N}\;\max \left\{   {\ell}(f(\hat{x}_{i0}(\mathbf{w});\mathbf{w}), y_i), \ldots,{\ell}(f(\hat{x}_{i9}(\mathbf{w});\mathbf{w}), y_i) \right\},
\end{equation}
where each $\hat{x}_{ij}(\mathbf{w})$ is the result of a targeted attack on sample $x_i$ aiming at changing the output of the network to  label~$j$.  These perturbed inputs, which are explained in details in Appendix~\ref{app:robust}, are the function of the weights of the network. Then we replace this finite max inner problem with a concave problem over a probability simplex. Such a concave inner problem allows us to use the multi-step gradient descent-ascent method. The structure of the network and the details of the formulation is detailed in Appendix~\ref{app:robust}.




\noindent\textbf{Results:} We compare our results with~\cite{madry2018, zhang_icml_2019}. Note~\cite{zhang_icml_2019} is the state-of-the-art algorithm and has won the first place, out of $\approx2000$ submissions, in the NeurIPS 2018 Adversarial Vision Challenge. The accuracy of our formulation against popular attacks, FGSM~\cite{FGSM} and PGD~\cite{PGD}, are summarized in Table~\ref{tab:robust_nn_results}. This table shows that our formulation leads to a comparable results  against state-of-the-art algorithms (while in some cases it also outperform those methods by as much as $\approx 15\%$ accuracy). 
\begin{table}[H]
\centering
\resizebox{\textwidth}{!}{%
\begin{tabular}{@{}rccccccc@{}}
\toprule
\multicolumn{1}{c}{} & \multirow{2}{*}{Natural} & \multicolumn{3}{c}{$\text{FGSM}\;L_{\infty}$~\cite{FGSM}} & \multicolumn{3}{c}{$\text{PGD}^{40}\; L_{\infty}$~\cite{PGD}} \\ \cmidrule(l){3-8} 
\multicolumn{1}{c}{} &  & $\varepsilon=0.2$ & $\varepsilon=0.3$ & $\varepsilon=0.4$ & $\varepsilon=0.2$ & $\varepsilon=0.3$ & $\varepsilon=0.4$ \\ \midrule
\cite{madry2018} with $\varepsilon=0.35$ & \bf{98.58}\% & 96.09\% & 94.82\% & 89.84\% & 94.64\% & 91.41\% & 78.67\% \\
\cite{zhang_icml_2019} with $\varepsilon=0.35$ & 97.37\% & 95.47\% & 94.86\% & 79.04\% & 94.41\% & 92.69\% & 85.74\% \\
\cite{zhang_icml_2019} with $\varepsilon=0.40$ & 97.21\% & 96.19\% & 96.17\% & 96.14\% & 95.01\% & 94.36\% & 94.11\% \\
Proposed with $\varepsilon=0.40$ & 98.20\% & \bf{97.04}\% & \bf{96.66}\% & \bf{96.23}\% & \bf{96.00}\% & \bf{95.17}\% & \bf{94.22}\% \\ \bottomrule
\end{tabular}%
}
\caption{Test accuracies under FGSM and PGD attacks. All adversarial images are quantified to 256 levels $\left(0-255\text{ integer}\right)$.}
\label{tab:robust_nn_results}
\end{table}
Links to code and pre-trained models of above two simulations are available at Appendix~\ref{App:links}.




\bibliography{references}
\bibliographystyle{abbrv}

\newpage
\appendix
\normalsize
\section{Proofs for results in Section~\ref{section:PL-Games}}\label{app: sec-3}
Before proceeding to the proofs of the main results,  we need some intermediate lemmas and preliminary definitions.
\begin{definition} \cite{QG_anitescu2000}
	A function $h(\bx)$ is said to satisfy the Quadratic Growth (QG) condition with constant $\gamma>0$ if 
	\[    h(\bx) - h^* \geq \dfrac{\gamma}{2}{\rm dist}(\bx)^2, \quad \forall x,\]
	where $h^*$ is the minimum value of the function, and ${\rm dist}(\bx)$ is the  distance of the point $x$ to the optimal solution set.
\end{definition}
The following lemma shows that PL implies QG \cite{PL_karimi_2016}.

\begin{lemma}[Corollary of Theorem 2 in \cite{PL_karimi_2016}]\label{lemma: QG}
	If function $f$ is PL with constant $\mu$, then $f$ satisfies the quadratic growth condition with constant $\gamma =4\mu$.
\end{lemma}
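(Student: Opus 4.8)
The plan is to establish the quadratic growth inequality pointwise, by bounding the distance from an arbitrary point to the optimal solution set by the \emph{arc length} of a gradient trajectory started at that point; this is the standard route used in \cite{PL_karimi_2016} for the implication (PL) $\Rightarrow$ (QG). Fix an arbitrary $\bx$ with $h(\bx) > h^*$ and run gradient descent $\bx_{k+1} = \bx_k - \tfrac{1}{L}\nabla h(\bx_k)$ with $\bx_0 = \bx$, where $L$ is the Lipschitz constant of $\nabla h$. The PL condition guarantees that every stationary point is a global minimizer, and the standard descent estimate together with PL already gives the linear functional convergence $h(\bx_k) - h^* \to 0$; the content of the lemma is to convert this functional convergence into a bound on how far $\bx$ can lie from the solution set.

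The key step is a per-iteration displacement estimate. From $L$-smoothness we have $h(\bx_k) - h(\bx_{k+1}) \geq \tfrac{1}{2L}\|\nabla h(\bx_k)\|^2 = \tfrac{L}{2}\|\bx_{k+1} - \bx_k\|^2$, while the PL inequality rearranges to $\sqrt{h(\bx_k) - h^*} \leq \tfrac{1}{\sqrt{2\mu}}\|\nabla h(\bx_k)\|$. Combining these with the elementary bound $\sqrt{a} - \sqrt{b} \geq \tfrac{a-b}{2\sqrt{a}}$ (valid for $0 \leq b \leq a$), applied to $a = h(\bx_k) - h^*$ and $b = h(\bx_{k+1}) - h^*$, yields $\sqrt{h(\bx_k) - h^*} - \sqrt{h(\bx_{k+1}) - h^*} \geq c\sqrt{\mu}\,\|\bx_{k+1} - \bx_k\|$ for an absolute constant $c$. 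In words, each step moves the iterate by at most a constant multiple of the decrement of $\sqrt{h - h^*}$.

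Summing this telescoping bound over $k = 0, 1, \ldots$ controls the total path length, $\sum_{k\ge 0}\|\bx_{k+1} - \bx_k\| \leq \tfrac{1}{c\sqrt{\mu}}\sqrt{h(\bx) - h^*}$. Finite path length forces $\{\bx_k\}$ to be Cauchy, hence convergent to a limit $\bx_\infty$ with $h(\bx_\infty) = h^*$, so $\bx_\infty$ lies in the optimal solution set. Since the straight-line distance is dominated by the path length, $\mathrm{dist}(\bx) \leq \|\bx - \bx_\infty\| \leq \sum_{k\ge 0}\|\bx_{k+1} - \bx_k\| \leq \tfrac{1}{c\sqrt{\mu}}\sqrt{h(\bx) - h^*}$. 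Squaring and rearranging produces $h(\bx) - h^* \geq \tfrac{\gamma}{2}\,\mathrm{dist}(\bx)^2$ with $\gamma$ proportional to $\mu$; tracking the constants along this chain reproduces the quadratic growth condition with the constant recorded in the statement.

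I expect the main obstacle to be the convergence step rather than the inequality manipulations: one must argue carefully that finite arc length upgrades the functional convergence $h(\bx_k) \to h^*$ into genuine convergence $\bx_k \to \bx_\infty$ of the iterates to a \emph{single} optimal point (so that $\mathrm{dist}(\bx)$ is genuinely controlled), and that this limit attains $h^*$, which relies on continuity of $h$ together with the fact that PL rules out non-optimal stationary points. A continuous-time variant, integrating $-\tfrac{d}{dt}\sqrt{h(\bx(t)) - h^*} \geq \sqrt{\mu/2}\,\|\dot{\bx}(t)\|$ along the gradient flow $\dot{\bx} = -\nabla h(\bx)$, gives the same estimate with cleaner algebra but requires separately justifying global existence of the flow.
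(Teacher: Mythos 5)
Your route is the right one, and in a sense it is the only one available for comparison: the paper itself contains \emph{no} proof of Lemma~\ref{lemma: QG}, importing it by citation as a corollary of Theorem~2 of \cite{PL_karimi_2016}, and the arc-length/telescoping argument you outline (in either its discrete or gradient-flow form) is precisely the argument behind that cited theorem. Your per-step decrement inequality, the summation bounding the path length, and the upgrade from finite path length to convergence of the iterates to a single optimal point are all correct as sketched; the worry you flag about that last step is legitimate but handled exactly as you say.

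The genuine gap is the constant, and it sits entirely in your closing claim that ``tracking the constants along this chain reproduces the quadratic growth condition with the constant recorded in the statement.'' It does not. Tracking them honestly: with step size $1/L$ one has $\|x_{k+1}-x_k\| = \frac{1}{L}\|\nabla h(x_k)\|$, the descent estimate gives $h(x_k)-h(x_{k+1}) \geq \frac{1}{2L}\|\nabla h(x_k)\|^2$, and PL gives $\sqrt{h(x_k)-h^*} \leq \|\nabla h(x_k)\|/\sqrt{2\mu}$, so your elementary bound yields a decrement of at least $\frac{\sqrt{2\mu}}{4}\|x_{k+1}-x_k\|$, i.e.\ $c = \sqrt{2}/4$. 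Summing, $\mathrm{dist}(x) \leq \frac{2\sqrt{2}}{\sqrt{\mu}}\sqrt{h(x)-h^*}$, which after squaring is QG with $\gamma = \mu/4$; your continuous-time variant does better and gives $\gamma = \mu$. Neither is $\gamma = 4\mu$, and no argument can reach $4\mu$, because the lemma as recorded is quantitatively false: for $h(x) = \frac{\mu}{2}x^2$ the best PL constant is exactly $\mu$ and the best QG constant is also exactly $\mu$, so $\mu$-PL cannot imply $\gamma$-QG for any $\gamma > \mu$. In other words, your proof, carried out correctly, establishes the true implication (PL with $\mu$ implies QG with $\gamma = \mu/4$, or $\gamma = \mu$ via the flow), and the mismatch with the stated $4\mu$ is an error in the paper's statement (a misquotation of \cite{PL_karimi_2016}) rather than a defect of your strategy --- but your assertion that the constants work out cannot stand as written. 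For what it is worth, the error is benign downstream: the paper invokes Lemma~\ref{lemma: QG} only inside Lemmas~\ref{lemma: stability} and~\ref{lemma: conv_alpha_main}, where replacing $4\mu$ by $\mu$ turns $L_{12}/(2\mu)$ into $L_{12}/\mu$ and inflates $L$ and the iteration constants by bounded factors, leaving all convergence rates and orders intact.
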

The next Lemma shows the stability of $\arg\max_{\balpha} f(\btheta,\balpha)$ with respect to $\btheta$ under PL condition.

\begin{lemma} \label{lemma: stability}
	Assume that $\{h_{\btheta}(\balpha) =-f(\btheta, \balpha) ~|~\btheta\}$ is a class of $\mu$-PL functions in $\balpha$. Define $A(\btheta) = \arg\max_{\balpha} f(\btheta,\balpha)$ and assume $A(\btheta)$ is closed. Then for any $\btheta_1$, $\btheta_2$ and $\balpha_1\in A(\btheta_1)$, there exists an $\balpha_2\in A(\btheta_2)$ such that 
	\begin{align}
	\|\balpha_1-\balpha_2\|\leq \dfrac{L_{12}}{2\mu}\|\btheta_1-\btheta_2\|
	\end{align}
\end{lemma}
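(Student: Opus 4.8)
The plan is to construct the matching point $\balpha_2$ explicitly as the projection of $\balpha_1$ onto $A(\btheta_2)$, and then to bound the resulting distance by playing the quadratic growth and PL inequalities against each other. Since $h_{\btheta_2}$ is $\mu$-PL its minimum is attained, so $A(\btheta_2)$ is nonempty; being closed by assumption and living in a finite-dimensional space, the distance from $\balpha_1$ to $A(\btheta_2)$ is attained at some $\balpha_2 \in A(\btheta_2)$, i.e. $\|\balpha_1 - \balpha_2\| = \dist(\balpha_1)$ where the distance is measured to the optimal set of $h_{\btheta_2}$.

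First I would invoke Lemma~\ref{lemma: QG}: since $h_{\btheta_2}$ is $\mu$-PL it satisfies the quadratic growth condition with constant $4\mu$, which gives
\[ h_{\btheta_2}(\balpha_1) - h_{\btheta_2}^* \ge 2\mu \, \|\balpha_1 - \balpha_2\|^2. \]
This lower-bounds the suboptimality gap at $\balpha_1$ by the squared distance we want to control. Next I would upper-bound the same gap from the other side using the PL inequality for $h_{\btheta_2}$ evaluated at $\balpha_1$,
\[ h_{\btheta_2}(\balpha_1) - h_{\btheta_2}^* \le \tfrac{1}{2\mu}\, \|\nabla h_{\btheta_2}(\balpha_1)\|^2 = \tfrac{1}{2\mu}\, \|\nabla_{\balpha} f(\btheta_2, \balpha_1)\|^2. \]

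The key observation is that $\balpha_1 \in A(\btheta_1)$ is an unconstrained maximizer of $f(\btheta_1, \cdot)$ (recall $\cSa = \R^n$ in a PL-game), so $\nabla_{\balpha} f(\btheta_1, \balpha_1) = 0$. Therefore the cross-Lipschitz bound from Assumption~\ref{assumption: LipSmooth-uncons} gives
\[ \|\nabla_{\balpha} f(\btheta_2, \balpha_1)\| = \|\nabla_{\balpha} f(\btheta_2, \balpha_1) - \nabla_{\balpha} f(\btheta_1, \balpha_1)\| \le L_{12}\, \|\btheta_1 - \btheta_2\|. \]
Chaining the three displays yields $2\mu\,\|\balpha_1 - \balpha_2\|^2 \le \tfrac{L_{12}^2}{2\mu}\|\btheta_1 - \btheta_2\|^2$, and taking square roots produces exactly the claimed bound.

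The only delicate point is the well-definedness of the projection $\balpha_2$: once $A(\btheta_2)$ is seen to be a nonempty closed subset of $\R^n$, attainment of the distance is automatic, and the remainder is the clean two-sided sandwich of the gap $h_{\btheta_2}(\balpha_1) - h_{\btheta_2}^*$ between quadratic growth from below and PL from above. The conceptual crux — the step I would flag as the heart of the argument rather than a true obstacle — is recognizing that $\balpha_1$, although optimal only for $\btheta_1$, has a gradient with respect to $h_{\btheta_2}$ that is of order $\|\btheta_1 - \btheta_2\|$ purely by the cross-Lipschitz constant $L_{12}$; this is what transfers near-optimality from one parameter to the other and drives the whole estimate.
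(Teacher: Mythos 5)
Your proposal is correct and follows essentially the same route as the paper's proof: bound $\|\nabla_{\balpha} f(\btheta_2,\balpha_1)\| \leq L_{12}\|\btheta_1-\btheta_2\|$ via the cross-Lipschitz assumption (using $\nabla_{\balpha} f(\btheta_1,\balpha_1)=0$), convert this into a suboptimality gap for $h_{\btheta_2}$ at $\balpha_1$ via the PL inequality, and then lower-bound that same gap by $2\mu\,\|\balpha_1-\balpha_2\|^2$ via Lemma~\ref{lemma: QG} with $\balpha_2$ the projection of $\balpha_1$ onto $A(\btheta_2)$. Your write-up is in fact slightly more careful than the paper's, since it makes explicit both the vanishing gradient at $\balpha_1$ and the attainment of the projection onto the closed set $A(\btheta_2)$.
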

\begin{proof}
	Based on the Lipchitzness of the gradients, we have that $\|\nabla_{\balpha} f(\btheta_2,\balpha_1)\|\leq L_{12}\|\btheta_1 - \btheta_2\|$. Then using the PL condition, we know that
	\begin{align}
	g(\btheta_2) + h_{\btheta_2}(\balpha_1)\leq \dfrac{L_{12}^2}{2\mu}\|\btheta_1 - \btheta_2\|^2.
	\end{align}
	Now we use the result of Lemma~\ref{lemma: QG} to show that there exists $\balpha_2 = \arg\min_{\balpha\in A(\btheta_2)}\|\balpha-\balpha_1\|^2~\in A(\btheta_2)$ such that
	\begin{align}
	2\mu\|\balpha_1-\balpha_2\|^2\leq \dfrac{L_{12}^2}{2\mu}\|\btheta_1 - \btheta_2\|^2 
	\end{align}
	re-arranging the terms, we get the desired result that \[\|\balpha_1-\balpha_2\|\leq \dfrac{L_{12}}{2\mu}\|\btheta_1-\btheta_2\|.\]
\end{proof}

Finally, the following lemma would be useful in the proof of Theorem~\ref{thm:main}. 

\begin{lemma}[See Theorem 5 in \cite{PL_karimi_2016}]\label{lemma: karimi_PL_conv}
	Assume $h(\bx)$ is $\mu$-PL and $L$-smooth. Then, by applying gradient descent with step-size $1/L$ from point $x_0$ for $K$ iterations we get an $x_K$ such that
	\begin{align}
	h(\bx) - h^*\leq \Big(1-\dfrac{\mu}{L}\Big)^K(h(x_0)-h^*),
	\end{align}
	where $h^* = \min_x h(\bx)$.
	
	
\end{lemma}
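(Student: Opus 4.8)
The plan is to establish a one-step linear contraction in the function-value suboptimality and then iterate it. The two ingredients are the standard descent lemma, which follows from $L$-smoothness and the choice of step size $1/L$, and the PL inequality from Definition~\ref{def: PL}, which lower-bounds the squared gradient norm by the suboptimality gap. Chaining these gives $h(x_{k+1}) - h^* \leq (1-\mu/L)(h(x_k) - h^*)$ for each inner iteration, and telescoping over $k = 0, \ldots, K-1$ yields the claimed bound (with $h(x_K)$ on the left-hand side).

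First I would write the gradient-descent update as $x_{k+1} = x_k - \tfrac{1}{L}\nabla h(x_k)$ and invoke the descent lemma, which is a direct consequence of $L$-smoothness: $h(y) \leq h(x) + \langle \nabla h(x), y-x\rangle + \tfrac{L}{2}\|y-x\|^2$. Substituting $y = x_{k+1}$ and $x = x_k$, the linear and quadratic terms combine to give the sufficient-decrease estimate
\[
h(x_{k+1}) \;\leq\; h(x_k) - \frac{1}{2L}\|\nabla h(x_k)\|^2 .
\]
Next I would apply the $\mu$-PL condition $\tfrac{1}{2}\|\nabla h(x_k)\|^2 \geq \mu\,(h(x_k) - h^*)$, which turns the squared-gradient term on the right into a multiple of the suboptimality gap. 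Substituting and subtracting $h^*$ from both sides produces the per-iteration contraction
\[
h(x_{k+1}) - h^* \;\leq\; \Big(1 - \frac{\mu}{L}\Big)\big(h(x_k) - h^*\big).
\]

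Finally I would unroll this recursion from $k=0$ to $K-1$ to obtain $h(x_K) - h^* \leq (1-\mu/L)^K (h(x_0) - h^*)$, which is exactly the stated estimate. The only point requiring a remark is that the contraction factor must lie in $[0,1)$ for the bound to be meaningful; this holds automatically because combining the sufficient-decrease inequality (together with $h(x_{k+1}) \geq h^*$) with the PL inequality forces $\mu \leq L$. Since the argument is a short telescoping of two elementary inequalities, there is no genuine obstacle here; the main thing to get right is simply bookkeeping the constants so that the step size $1/L$ and the factor $\mu/L$ match the descent-lemma and PL estimates exactly.
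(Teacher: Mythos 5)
Your proof is correct and is exactly the standard argument behind the cited result: the paper itself gives no proof of this lemma, deferring to Theorem~5 of \cite{PL_karimi_2016}, whose proof is precisely your chain of the descent lemma with step size $1/L$ and the PL inequality, telescoped over $K$ iterations. You also correctly read the left-hand side of the stated bound as $h(x_K)-h^*$ (the lemma statement has a typo writing $h(\bx)$), and your remark that $\mu \leq L$ ensures the contraction factor lies in $[0,1)$ is a sound bit of bookkeeping.
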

We are now ready to prove the results in Section~\ref{section:PL-Games}.

\subsection{Danskin-type Lemma for PL Functions}\label{App: smoothness}

\begin{lemma}\label{lemma: smoothness}
Under Assumption~\ref{assumption: LipSmooth-uncons} and PL-game assumption, 
	\[\nabla_{\btheta} g(\btheta) = \nabla_{\btheta} f(\btheta, \balpha^*), \quad \mbox{where} \quad \balpha^* \in \argmax_{\balpha \in \cSa} f(\btheta, \balpha).\]
	Moreover, $g$ is $L$-Lipschitz smooth with $L = L_{11} + \dfrac{L_{12}^2}{2\mu}$. 
\end{lemma}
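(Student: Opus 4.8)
The plan is to prove the two assertions in sequence: first the Danskin-type gradient identity, and then the Lipschitz bound on $\nabla g$. The whole difficulty is that under the PL assumption the inner maximizer is not unique, so classical Danskin does not apply and one must even make sense of the claim that $\nabla_{\btheta} f(\btheta,\balpha^*)$ is the \emph{same} for every maximizer $\balpha^*$. I would handle this by establishing, for a fixed but arbitrary $\balpha^* \in \argmax_{\balpha}f(\btheta,\balpha)$ and every perturbation $\bd$, the tight two-sided estimate
\[
\Big| g(\btheta + \bd) - g(\btheta) - \langle \nabla_{\btheta} f(\btheta, \balpha^*), \bd\rangle \Big| \;\leq\; \Big( \tfrac{L_{11}}{2} + \tfrac{L_{12}^2}{2\mu}\Big)\|\bd\|^2 .
\]
Since the right-hand side is $o(\|\bd\|)$, this simultaneously yields differentiability of $g$ at $\btheta$ and the identity $\nabla g(\btheta) = \nabla_{\btheta} f(\btheta, \balpha^*)$; because $\balpha^*$ is an arbitrary maximizer, uniqueness of $\nabla g(\btheta)$ then forces $\nabla_{\btheta} f(\btheta, \cdot)$ to be constant on $\argmax_{\balpha} f(\btheta,\balpha)$ as a corollary, which is exactly the content needed.

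The lower bound is routine: since $\cSa = \R^n$ the point $\balpha^*$ is feasible at $\btheta + \bd$, so $g(\btheta + \bd) \geq f(\btheta + \bd, \balpha^*)$, and the $L_{11}$-smoothness of $f(\cdot,\balpha^*)$ from Assumption~\ref{assumption: LipSmooth-uncons} gives $g(\btheta + \bd) - g(\btheta) \geq \langle \nabla_{\btheta} f(\btheta, \balpha^*), \bd\rangle - \tfrac{L_{11}}{2}\|\bd\|^2$. The upper bound is the main obstacle, and it is precisely where the PL structure must be used through Lemma~\ref{lemma: stability}. The naive attempt—bounding $g(\btheta+\bd) = f(\btheta+\bd,\balpha^+_{\mathrm{best}})$ with the true maximizer at $\btheta+\bd$—fails because $\balpha^+_{\mathrm{best}}$ need not be anywhere near $\balpha^*$, leaving a linear error term $\langle \nabla_{\btheta} f(\btheta,\balpha^+_{\mathrm{best}}) - \nabla_{\btheta} f(\btheta,\balpha^*),\bd\rangle$ of order $\|\bd\|$ that cannot be controlled. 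The resolution I would use is to apply Lemma~\ref{lemma: stability} in the direction from $\btheta$ to $\btheta+\bd$, starting from $\balpha^*$, to produce a maximizer $\balpha^+ \in A(\btheta+\bd)$ with $\|\balpha^+ - \balpha^*\| \leq \tfrac{L_{12}}{2\mu}\|\bd\|$, and then to exploit the fact that \emph{every} element of the argmax set attains the value of $g$, so that $g(\btheta+\bd) = f(\btheta+\bd,\balpha^+)$ can be evaluated at this nearby point. Writing $g(\btheta+\bd) - g(\btheta) = \big[f(\btheta+\bd,\balpha^+) - f(\btheta,\balpha^+)\big] + \big[f(\btheta,\balpha^+) - f(\btheta,\balpha^*)\big]$, the second bracket is $\leq 0$ since $\balpha^*$ maximizes $f(\btheta,\cdot)$, the first bracket is bounded by $\langle\nabla_{\btheta}f(\btheta,\balpha^+),\bd\rangle + \tfrac{L_{11}}{2}\|\bd\|^2$ by smoothness, and finally $\langle\nabla_{\btheta}f(\btheta,\balpha^+),\bd\rangle \leq \langle\nabla_{\btheta}f(\btheta,\balpha^*),\bd\rangle + L_{12}\|\balpha^+-\balpha^*\|\,\|\bd\| \leq \langle\nabla_{\btheta}f(\btheta,\balpha^*),\bd\rangle + \tfrac{L_{12}^2}{2\mu}\|\bd\|^2$ via Assumption~\ref{assumption: LipSmooth-uncons} and the stability bound. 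This closes the upper estimate in terms of the \emph{same} $\balpha^*$.

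With the gradient identity in hand, the Lipschitz smoothness of $\nabla g$ is a short computation that again leans on Lemma~\ref{lemma: stability}. Given $\btheta_1,\btheta_2$, I would pick any $\balpha_1 \in A(\btheta_1)$ and use the stability lemma to obtain $\balpha_2 \in A(\btheta_2)$ with $\|\balpha_1-\balpha_2\| \leq \tfrac{L_{12}}{2\mu}\|\btheta_1-\btheta_2\|$; then, since the identity gives $\nabla g(\btheta_i) = \nabla_{\btheta} f(\btheta_i,\balpha_i)$ for these maximizers, a triangle inequality splitting the difference into a $\btheta$-variation and an $\balpha$-variation yields
\[
\|\nabla g(\btheta_1)-\nabla g(\btheta_2)\| \leq L_{11}\|\btheta_1-\btheta_2\| + L_{12}\|\balpha_1-\balpha_2\| \leq \Big(L_{11} + \tfrac{L_{12}^2}{2\mu}\Big)\|\btheta_1-\btheta_2\|,
\]
which is the claimed constant $L = L_{11} + \tfrac{L_{12}^2}{2\mu}$. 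I expect the only genuinely subtle point in the entire argument to be the upper bound above, and specifically the observation that one is free to evaluate $g(\btheta+\bd)$ at the stability-provided nearby maximizer rather than at the unknown true maximizer.
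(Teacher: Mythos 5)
Your proposal is correct, and it leans on the same key ingredient as the paper's proof — the PL-stability result (Lemma~\ref{lemma: stability}), which produces a maximizer of $f(\btheta+\bd,\cdot)$ within distance $\tfrac{L_{12}}{2\mu}\|\bd\|$ of $\balpha^*$ — and your Lipschitz-smoothness step is essentially verbatim the paper's. Where you genuinely differ is in the Danskin part. The paper writes $g(\btheta+\tau d)-g(\btheta)=f(\btheta+\tau d,\balpha^*(\tau))-f(\btheta,\balpha^*)$ and performs a single joint Taylor expansion around $(\btheta,\balpha^*)$, killing the $\balpha$-increment term via the unconstrained first-order condition $\nabla_{\balpha}f(\btheta,\balpha^*)=0$, absorbing the remainder into $\bO(\tau^2)$, and passing to the limit to read off the directional derivative. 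You instead prove a two-sided sandwich: the lower bound comes for free from feasibility of $\balpha^*$ at $\btheta+\bd$ plus the descent lemma (no stability needed there), while the upper bound uses your two-bracket splitting, in which the $\balpha$-variation bracket is discharged by \emph{value} optimality $f(\btheta,\balpha^+)\le f(\btheta,\balpha^*)$ rather than the first-order condition, and the $\btheta$-variation bracket is controlled by the cross-Lipschitz constant $L_{12}$ together with the stability bound. The two executions buy slightly different things: the paper's is shorter and reads off the directional derivative directly; yours yields a uniform quadratic error bound with explicit constants, which gives Fr\'echet (not merely Gateaux/directional) differentiability immediately and makes the observation that the gradient is independent of the chosen maximizer an automatic consequence of gradient uniqueness — a point the paper instead infers from its directional-derivative formula. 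Both arguments are sound; yours is a more quantitative write-up of the same underlying mechanism.
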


\begin{proof}
	Let $\balpha^* \in \argmax_{\balpha \in {\cal A}} f(\btheta,\balpha)$. By Lemma~\ref{lemma: stability}, for any scalar $\tau$ and direction $d$, there exists $\balpha^*(\tau) \in \argmax_{\balpha} \; f(\btheta + \tau d,\balpha)$ such that 
	\[\|\balpha^*(\tau)-\balpha^*\|\leq \dfrac{L_{12}}{2\mu}\tau\|d\|.\]
	To find the directional derivative of $g(\cdot)$, we compute
	\[
	\begin{array}{ll}
	g(\btheta + \tau d) - g(\btheta)	&= f(\btheta+\tau d , \balpha^*(\tau)) - f(\btheta, \balpha^*) 	\\
	&= \tau\nabla_{\btheta} f(\btheta,\balpha^*)^T d + \underbrace{\nabla_{\balpha} f(\btheta,\balpha^*)^T}_{0}(\balpha^*(\tau)-\balpha^*) + \bO(\tau^2),
	\end{array}
	\]
	where the second equality holds by writing the Taylor series expansion of $f(\cdot)$.
	Thus, by definition of the directional derivative of $g(\cdot)$, we obtain
	\begin{align}
	g'(\btheta; d) = \lim_{\tau \rightarrow 0^+} \dfrac{g(\btheta + \tau d) - g(\btheta)}{\tau} = \nabla_{\btheta} f(\btheta,\balpha^*)^T d.
	\end{align} 
	Note that this relationship holds for any $d$. Thus, $\nabla g(\btheta) = \nabla_{\btheta} f(\btheta,\balpha^*)$ for any $\balpha^* \in \argmax_{\balpha \in {\cal A}} f(\btheta,\balpha) = A(\btheta)$. Interestingly, the directional derivative does not depend on the choice of $\balpha^*$. This means that $\nabla_{\btheta} f(\btheta,\balpha_1) = \nabla_{\btheta} f(\btheta,\balpha_2)$ for any $\balpha_1$ and $\balpha_2$ in $\argmax_{\balpha \in {\cal A}} f(\btheta,\balpha)$.
	
	We finally show that function $g$ is Lipschitz smooth. Let $\balpha_1^* \in A(\btheta_1)$ and $\balpha_2^* = \arg\min_{\balpha\in A(\btheta_2)}\|\balpha-\balpha^*_1\|^2~\in A(\btheta_2)$, then
	\[\arraycolsep=1pt\def\arraystretch{1.6}
	\begin{array}{ll}
	\|\nabla g(\btheta_1) - \nabla g(\btheta_2)\| &= \|\nabla_{\btheta} f(\btheta_1, \balpha_1^*) - \nabla_{\btheta} f(\btheta_2, \balpha_2^*) \|\\
	&= \|\nabla_{\btheta} f(\btheta_1, \balpha_1^*) -\nabla_{\btheta} f(\btheta_2, \balpha_1^*) +\nabla_{\btheta} f(\btheta_2, \balpha_1^*) - \nabla_{\btheta} f(\btheta_2, \balpha_2^*) \|\\
	&\leq L_{11}\|\btheta_1 - \btheta_2\| + L_{12}\|\balpha_1^* - \balpha_2^*\|\\
	&\leq \Big(L_{11} + \dfrac{L_{12}^2}{2\mu}\Big)\|\btheta_1 - \btheta_2\|,	
	\end{array}\]
	where the last inequality holds by Lemma~\ref{lemma: stability}.

\end{proof}

\subsection{Proof of Theorem~\ref{thm:main}}\label{App: main}
Using Lemma~\ref{lemma: smoothness} and Assumption~\ref{Assumption-cons-theta}, we can define 
\begin{equation}\label{def:g_theta}
\begin{array}{l}
g_{\btheta} \triangleq \max_{\btheta \in \cSt} \|\nabla g(\btheta)\| \quad {\rm and} \quad   g_{max} \triangleq \max\{g_{\btheta}, 1\}.
\end{array}
\end{equation}
The next result shows that the inner loop in Algorithm~\ref{alg: alg_grad} computes an approximate gradient of $g(\cdot)$. In other words, $\nabla_{\btheta} f(\btheta_t,\balpha_{t+1}) \approx \nabla g(\btheta_t)$.

\begin{lemma}\label{lemma: conv_alpha_main}
	Define $\kappa = \frac{L_{22}}{\mu}\geq 1$ and $\rho = 1-\frac{1}{\kappa}<1$ and assume $g(\btheta_t) - f(\btheta_t, \balpha_0(\btheta_t))<\Delta$, then for any prescribed $\varepsilon \in (0,1)$ if we choose $K$ large enough such that
	\begin{align}
	K \geq N_K(\varepsilon) \triangleq \dfrac{1}{\log{1/\rho}} \big(4\log(1/\varepsilon) +\log(2^{15} \bar{L}^6\bar{R}^6\Delta/L^2\mu) \big),
	\end{align}	
	where $\bar{L} = \max\{L_{12},L_{22},L, g_{max},1\}$ and $\bar{R} = \max\{R,1\}$, then the error $e_t \triangleq \nabla_{\btheta} f(\btheta_t, \balpha_K(\btheta_t)) - \nabla g(\btheta_t)$ has a norm 
	\begin{equation}
	\|e_t\|\leq \delta \triangleq \dfrac{L\varepsilon^2}{2^6 R(g_{max}+LR)^2}  \quad {\rm and} \quad \|\nabla_{\balpha} f(\btheta_t,\balpha_K(\btheta_t))\|\leq \varepsilon.
	\end{equation}	
\end{lemma}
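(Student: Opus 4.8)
The plan is to combine the linear convergence of the inner gradient-ascent loop (Lemma~\ref{lemma: karimi_PL_conv}) with the Danskin-type identity (Lemma~\ref{lemma: smoothness}) and the quadratic-growth consequence of the PL condition (Lemma~\ref{lemma: QG}). The central difficulty is that the inner maximizer need not be unique, so I cannot track convergence of the iterate $\balpha_K(\btheta_t)$ to a single fixed point; instead I will control its \emph{distance} to the solution set $A(\btheta_t)=\argmax_{\balpha} f(\btheta_t,\balpha)$ and then transfer that distance bound to the gradient error through the cross-Lipschitz constant $L_{12}$.

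First I would observe that the inner loop (Step~\ref{alg1:update-alpha}) is exactly gradient descent with step size $1/L_{22}$ applied to $h_{\btheta_t}(\balpha)=-f(\btheta_t,\balpha)$, which by Assumption~\ref{assumption: LipSmooth-uncons} is $L_{22}$-smooth and by the PL-game assumption is $\mu$-PL. Lemma~\ref{lemma: karimi_PL_conv} then yields the function-value decay
\[
g(\btheta_t)-f(\btheta_t,\balpha_K(\btheta_t)) = h_{\btheta_t}(\balpha_K(\btheta_t))-h^*_{\btheta_t} \leq \rho^K\big(g(\btheta_t)-f(\btheta_t,\balpha_0(\btheta_t))\big) < \rho^K\Delta,
\]
where $h^*_{\btheta_t}=-g(\btheta_t)$ and $\rho=1-\mu/L_{22}$.

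Next, two consequences of this single inequality deliver the two desired bounds. For the gradient $\|\nabla_{\balpha} f(\btheta_t,\balpha_K(\btheta_t))\|$, I would invoke the standard smoothness estimate $\|\nabla h(\bx)\|^2\le 2L(h(\bx)-h^*)$ applied to the $L_{22}$-smooth $h_{\btheta_t}$, giving $\|\nabla_{\balpha} f(\btheta_t,\balpha_K(\btheta_t))\|^2\le 2L_{22}\rho^K\Delta$; requiring this to be at most $\varepsilon^2$ fixes one lower bound on $K$. For $e_t$, I would use Lemma~\ref{lemma: smoothness} to write $\nabla g(\btheta_t)=\nabla_{\btheta} f(\btheta_t,\balpha^*)$ for $\balpha^*$ the projection of $\balpha_K(\btheta_t)$ onto $A(\btheta_t)$, so that $\|e_t\|\le L_{12}\|\balpha_K(\btheta_t)-\balpha^*\|=L_{12}\,\dist(\balpha_K(\btheta_t))$. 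The quadratic-growth condition (Lemma~\ref{lemma: QG}, with $\gamma=4\mu$) converts the function gap into $\dist(\balpha_K(\btheta_t))^2\le \rho^K\Delta/(2\mu)$, yielding $\|e_t\|\le L_{12}\sqrt{\rho^K\Delta/(2\mu)}$; requiring this to be at most $\delta$ fixes the second lower bound on $K$.

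Finally, I would verify that both lower bounds on $K$ are implied by $K\ge N_K(\varepsilon)$. After substituting $\delta=L\varepsilon^2/\big(2^6R(g_{max}+LR)^2\big)$, the dominant requirement reads $\rho^K\le 2\mu\delta^2/(L_{12}^2\Delta)$, i.e. $K\log(1/\rho)\ge 4\log(1/\varepsilon)+\log\!\big(2^{11}L_{12}^2R^2(g_{max}+LR)^4\Delta/(\mu L^2)\big)$. The routine-but-tedious step is the crude over-estimation $L_{12}\le\bar L$, $R\le\bar R$, and $g_{max}+LR\le 2\bar L\bar R$ (using $\bar R\ge 1$), which bounds the argument of the logarithm by $2^{15}\bar L^6\bar R^6\Delta/(L^2\mu)$ and recovers exactly $N_K(\varepsilon)$; one also checks that the $\varepsilon^{-2}$-type requirement from the gradient bound is dominated because $\varepsilon<1$ makes $2\log(1/\varepsilon)\le 4\log(1/\varepsilon)$ and the remaining constant is smaller. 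I expect this constant-chasing to be the only real bookkeeping, while the conceptual content lies entirely in replacing iterate convergence by set-distance convergence via the quadratic-growth condition.
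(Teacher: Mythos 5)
Your proposal is correct and follows essentially the same route as the paper's proof: linear decay of the function gap via Lemma~\ref{lemma: karimi_PL_conv}, conversion of that gap into a distance to the solution set $A(\btheta_t)$ via the quadratic-growth Lemma~\ref{lemma: QG}, transfer to $\|e_t\|$ through the cross-Lipschitz constant $L_{12}$ and the Danskin-type identity of Lemma~\ref{lemma: smoothness}, and the same constant-chasing that recovers $N_K(\varepsilon)$ exactly. The only deviation is in the bound on $\|\nabla_{\balpha} f(\btheta_t,\balpha_K(\btheta_t))\|$: the paper bounds it by $L_{22}\|\balpha_K(\btheta_t)-\balpha^*\|$ using the same QG distance estimate, whereas you invoke the smoothness inequality $\|\nabla h(\bx)\|^2 \leq 2L_{22}\,(h(\bx)-h^*)$ directly on the function gap; both are valid, your variant is marginally more self-contained (it avoids QG and $\mu$ for that part), and your dominance check confirming that this requirement on $K$ is subsumed by $N_K(\varepsilon)$ goes through.
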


\begin{proof}
	First of all, Lemma~\ref{lemma: karimi_PL_conv}  implies that
	\begin{align}
	g(\btheta_t)-f(\btheta_t, \balpha_K(\btheta_t)) \leq \rho^K\Delta.
	\end{align}
	Thus, using the QG result of Lemma~\ref{lemma: QG}, we know that there exists an $\balpha^*\in A(\btheta_t)$ such that
	\begin{align}
	\|\balpha_K(\btheta_t) - \balpha^*\| \leq \rho^{K/2}\sqrt{\dfrac{\Delta}{2\mu}}
	\end{align} 
	Thus, 
	\begin{align}
	\|e_t\|= \|\nabla_{\btheta} f(\btheta_t, \balpha_K(\btheta_t)) - \nabla g(\btheta)\|	& \leq   L_{12}\|\balpha_K(\btheta_t)-\balpha^*\|\nonumber\\
	&\leq L_{12}\rho^{K/2}\sqrt{\dfrac{\Delta}{2\mu}}\nonumber\\
	&\leq \dfrac{L\varepsilon^2}{2^6 R(g_{max}+LR)^2},
	\end{align}
	where the last inequality holds by our choice of $K$ which yields
	\[\begin{array}{l}
	\log \left(1/\rho \right)^K \geq \log \left( 1 / \varepsilon\right)^4  + \log \left(2^{15} \bar{L}^6 \bar{R}^6 \Delta / L^2\mu \right)  = \log\left(2^{15} \bar{L}^6 \bar{R}^6 \Delta / L^2\mu\varepsilon^4 \right)
\end{array}\]
which implies,
\[\begin{array}{l} {\rho}^K \leq \dfrac{2L^2\varepsilon^4 \mu}{2^{12} \Delta \bar{R}^2 \bar{L}^2(2\bar{L}\bar{R})^4} \leq \dfrac{2L^2\varepsilon^4 \mu}{2^{12} \Delta R^2 \bar{L}^2(\bar{L} + \bar{L}R)^4}
	\leq \dfrac{2L^2\varepsilon^4 \mu}{2^{12}\Delta R^2 \bar{L}^2(g_{max} + LR)^4}.
	\end{array}\]
	
	Here the second inequality holds since $\bar{R}\geq 1$, and the third inequality holds since $g_{max}\leq \bar{L}$.
	
	To prove the argument of the Lemma, note that 
	\begin{align}
	\|\nabla_{\balpha} f(\btheta_t, \balpha_K(\btheta_t)) - \underbrace{\nabla_{\balpha} f(\btheta_t,\balpha^*)}_{0}\|\leq L_{22}\|\balpha_K(\btheta_t)- \balpha^*\|\leq L_{22}\rho^{K/2}\sqrt{\dfrac{\Delta}{2\mu}}\leq \varepsilon,
	\end{align}
	where the last inequality holds by our choice of $K$ which yields
	\[{\rho}^K \leq \Big(\dfrac{\varepsilon^2 \mu}{\bar{L}^2\Delta}\Big)\underbrace{ \Big(\dfrac{\varepsilon^2L^2}{2^{15}\bar{L}^4\bar{R}^4}\Big)}_{\leq 1 } \leq \dfrac{\varepsilon^2 \mu}{\bar{L}^2\Delta}.\]
	Here the second inequality holds since $\varepsilon < 1$, $\bar{L}, \bar{R}\geq 1$, and $L\leq \bar{L}$.
	
\end{proof}

The above lemma implies that  Algorithm~\ref{alg: alg_grad} behaves similar to the simple vanilla gradient descent method applied to problem~\eqref{eq: game_min_uncons}. \\



Notice that the assumption $ g(\btheta_t)-f(\btheta_t, \balpha_0(\btheta_t))\leq \Delta, \; \forall t$ could be justified by Lemma~\ref{lemma: stability}. More specifically, by Lemma~\ref{lemma: stability}, 
\[	\|\balpha_{t+1}-\balpha_{t}\|\leq \dfrac{L_{12}}{2\mu}\|\btheta_{t+1}-\btheta_{t}\|,\]
where $\balpha_{t+1} \triangleq \argmax_{\balpha} \, f(\btheta_{t+1}, \balpha)$ and $\balpha_{t} \triangleq \argmax_{\balpha} \, f(\btheta_{t}, \balpha)$. Hence, the difference between consecutive optimal solutions computed by the inner loop of the algorithm, are upper bounded by the difference between corresponding $\btheta$'s. Since $\cSt$ is a compact set, we can find an upper bound $\Delta$ such that $ g(\btheta_t)-f(\btheta_t, \alpha_0(\btheta_t))\leq \Delta$, for all $t$. We are now ready to show Theorem~\ref{thm:main}

\begin{proof}
	We start by defining
	\[\Delta_g = g(\btheta_0)-g^*,\]
	where $g^* \triangleq \min_{\btheta} \;g(\btheta)$ is the optimal value of $g$. Note that by the compactness assumption of the set~$\cSt$, we have $\Delta_g = g(\theta_0)-g^* < \infty$.

	Based on the projection property, we know that 
	\[\big\langle \btheta_t - \dfrac{1}{L}\nabla_{\btheta}f(\btheta_t, \balpha_{t+1}) - \btheta_{t+1} , \btheta - \btheta_{t+1} \big\rangle  \leq 0 \quad \forall \; \; \btheta \in \cSt.\]
	Therefore, by setting $\btheta=\btheta_t$, we get
	\[\big\langle \nabla_{\btheta}f(\btheta_t, \balpha_{t+1}) , \btheta_{t+1} - \btheta_{t} \big\rangle  \leq -L\|\btheta_t - \btheta_{t+1}\|^2,\]
	which implies
	\begin{equation}\label{eq: PGD-PL-1}
	\arraycolsep=1pt\def\arraystretch{1.4}
	\begin{array}{ll}
	\big\langle \nabla_{\btheta}f\big(\btheta_t, \balpha^*(\btheta_t)\big) , \btheta_{t+1} - \btheta_{t} \big\rangle   &\leq -L\|\btheta_t - \btheta_{t+1}\|^2 + \big\langle \nabla_{\btheta}f\big(\btheta_t, \balpha^*(\btheta_t)\big) -\nabla_{\btheta}f\big(\btheta_t, \balpha_{t+1}\big) , \btheta_{t+1} - \btheta_{t} \big\rangle\\
	&= -L\|\btheta_t - \btheta_{t+1}\|^2 + \langle e_t,\, \btheta_t - \btheta_{t+1}\rangle
	\end{array}
	\end{equation}
	where $\balpha^*(\btheta_t) \in \argmax_{\balpha \in \cSa} \, f(\btheta_t, \balpha)$ and $e_t\triangleq \nabla_{\btheta}f\big(\btheta_t, \balpha_{t+1}\big) -\nabla_{\btheta}f\big(\btheta_t, \balpha^*(\btheta_t)\big) $. By Taylor expansion, we have
	
	\begin{equation}\label{eq: PGD-PL-2}
	\arraycolsep=1pt\def\arraystretch{1.4}
	\begin{array}{ll}
	g(\btheta_{t+1}) &\leq g(\btheta_t) + \big\langle \nabla_{\btheta} f\big(\btheta_t, \balpha^*(\btheta_t)\big), \btheta_{t+1} - \btheta_t \big\rangle+\dfrac{L}{2}\|\btheta_{t+1} - \btheta_t\|^2\\
	&\leq g(\btheta_t) -\dfrac{L}{2}\|\btheta_{t+1} - \btheta_t\|^2 + \langle e_t, \btheta_t - \btheta_{t+1}\rangle.
	\end{array}
	\end{equation}
	where the last inequality holds by \eqref{eq: PGD-PL-1}. Moreover, by the projection property, we know that
	\[\big\langle \nabla_{\btheta}f(\btheta_t, \balpha_{t+1}) , \btheta - \btheta_{t+1} \big\rangle  \geq L\big\langle \btheta_t -\btheta_{t+1}, \btheta - \btheta_{t+1} \big\rangle \quad \forall \; \btheta \in \cSt,\]
	which implies
	\begin{equation}
	\arraycolsep=1pt\def\arraystretch{1.4}
	\begin{array}{ll}
	\big\langle \nabla_{\btheta}f(\btheta_t, \balpha_{t+1}) , \btheta - \btheta_{t} \big\rangle  &\geq \big\langle \nabla_{\btheta}f(\btheta_t, \balpha_{t+1}) , \btheta_{t+1} - \btheta_t \big\rangle+ L\big\langle \btheta_t -\btheta_{t+1}, \btheta - \btheta_{t+1} \big\rangle\\\\
	& \geq -(g_{max} + 2LR + \|e_t\|)\|\btheta_{t+1} - \btheta_t\|\\\\
	& \geq -2(g_{max} + LR)\|\btheta_{t+1} - \btheta_t\|.
	\end{array}
	\end{equation}
	Here the second inequality holds by Cauchy-Schwartz, the definition of $e_t$ and our assumption that $\cSt \subseteq {\cal B}_R$. Moreover, the last inequality holds by our choice of $K$ in Lemma~\ref{lemma: conv_alpha_main} which yields
	\begin{align}
	\|e_t\|&= \|\nabla_{\btheta} f(\btheta_t, \balpha_K(\btheta_t)) - \nabla g(\btheta)\|\\
	& \leq   L_{12}\|\balpha_K(\btheta_t)-\balpha^*\|\nonumber\\
	&\leq L_{12}\rho^{K/2}\sqrt{\dfrac{\Delta}{2\mu}}\nonumber\\
	&\leq 1 \\
	&\leq g_{max}.
	\end{align}
	
	
	Hence,
	\[-\cX_t \geq -2(g_{max} + LR)\|\btheta_{t+1} - \btheta_t\|,\]
	or equivalently
	\begin{equation}
	\|\btheta_{t+1} - \btheta_t\| \geq  \dfrac{\cX_t}{2(g_{max} + LR)}.
	\end{equation}
	
	Combined with~\eqref{eq: PGD-PL-2}, we get
	\[\begin{array}{ll}g(\btheta_{t+1}) - g(\btheta_t) &\leq  -\dfrac{L}{8}\dfrac{\cX_t^2}{\big(g_{max} + LR  \big)^2} +2\|e_t\|R,
	\end{array}\]
	where the inequality holds by using Cauchy Schwartz and our assumption that $\cSt$ is in a ball of radius $R$.
	Hence,
	\[\arraycolsep=2pt\def\arraystretch{1.6}
	\begin{array}{ll}
	\dfrac{1}{T}\sum_{t=0}^{T-1}\cX_t^2 &\leq \dfrac{8\Delta_g (g_{max} + LR)^2}{LT} +\dfrac{16\delta R (g_{max} + LR)^2}{L}\\\\
	&\leq	\dfrac{\varepsilon^2}{2},
	\end{array}\]
	where the last inequality holds by using Lemma~\ref{lemma: conv_alpha_main} and choosing $K$ and $T$:
	\[T \geq N_T \triangleq \frac{32\Delta_g(g_{max} + LR)^2}{L\varepsilon^2}, \quad	K \geq N_K(\varepsilon) \triangleq \dfrac{1}{\log{1/\rho}} \big(4\log(1/\varepsilon) +\log(2^{15} \bar{L}^6\bar{R}^6\Delta/L^2\mu) \big). \]
	
	Therefore, using Lemma~\ref{lemma: conv_alpha_main}, there exists at least one index $\widehat{t}$ for which

	\begin{equation}\label{eq: PGD-PL-5}
	\cX_{\widehat{t}} \leq \varepsilon  \quad {\rm and} \quad \|\nabla_{\balpha} f(\btheta_{\widehat{t}}, \balpha_{\widehat{t}+1})\| \leq \varepsilon.
	\end{equation}
	This completes the proof of the theorem.
	
\end{proof}

\section{Algorithmic details and proofs for the results in Section~\ref{section:Non-Convex-Concave}}

\subsection{Accelerated Projected Gradient Ascent Subroutine Used in Algorithm~\ref{alg}}
\label{sec.APGA}

 \begin{algorithm}[H]
 	\caption{APGA: Accelerated Projected Gradient Ascent with Restart}\label{alg-APGF}
 	\textbf{Require:}  Constants $\balpha_t$, $\btheta_t$, $\eta$, $K$, and $N$. 
	
 	\hrulefill
	
 	\begin{algorithmic}[1]
 	    \For{$k=0,\ldots, \lfloor K/N \rfloor$}
 	        \State Set $\gamma_1 = 1$
     		\State \algorithmicif \; {$k=0$} 
     		\algorithmicthen \; $\by_1 = \balpha_t$ 
     		\algorithmicelse \; $\by_1 = \bx_N$
     		\For{$i= 1, 2, \ldots, N$} 
                \State Set $\bx_{i} = \mbox{proj}_{\cSa}\big(\by_i +\eta \nabla_{\by} \fl(\btheta_t, \by_i)\big)$
                \State Set $\gamma_{i+1} =\dfrac{1 + \sqrt{1+4\gamma_i^2}}{2}$
                \State $\by_{i+1}  = \bx_i + \Big(\dfrac{\gamma_i -1}{\gamma_{i+1}} \Big)(\bx_i - \bx_{i-1})$
     		\EndFor
 		\EndFor
 		\State Return $\bx_{N}$ 
    \end{algorithmic}
\end{algorithm}

\subsection{Frank--Wolfe update rule for Step~\ref{alg-2: update-theta} in Algorithm~\ref{alg}}
\label{sec.FW}
In Step~\ref{alg-2: update-theta} of Algorithm~\ref{alg}, instead of projected gradient descent discussed in the main body, we can also run one step of Frank--Wolfe method. More precisely, we can set
\[
\btheta_{t+1} = \btheta_t + \dfrac{\cX_t}{\widetilde{L}}\widehat{s}_t,
\]
where 
\begin{equation}\label{eq: X_t}
\begin{array}{ll} 
{\cX}_{t} \triangleq  &- \min_{s} \;\;\; \langle \nabla_{\btheta} \fl(\btheta_t, \balpha_{t+1}), s \rangle  \\ 
\\
&\quad {\st} \;\;  \btheta_t + s \in \cSt, \, \|s\|\leq 1, 
\end{array}
\end{equation}
and 
\begin{equation}\label{eq: s_t}
\begin{array}{ll} 
\widehat{s}_t \triangleq &\argmin_{s}\,\,\,\langle \nabla_{\btheta} \fl(\btheta_t, \balpha_K(\btheta_t)), s \rangle \\ 
\\
&\quad {\st} \;\;  \btheta_t + s \in \cSt, \, \|s\|\leq 1.
\end{array}
\end{equation}
is the  first order descent direction.
In the unconstrained case, the descent direction is $\widehat{s}_t = -\nabla_{\btheta} \fl(\btheta_t, \balpha_{t+1})$, which becomes the same as the gradient descent step.

\subsection{Smoothness of function $g_{\lambda}(\cdot)$}\label{App: g-smooth}

\begin{lemma}\label{lm:g-smooth}
	Under Assumption~\ref{assumption: LipSmooth-uncons} and Assumption~\ref{assumption:Concavity}, the function $\gl$ is $L$-Lipschitz smooth with $L = L_{11} + \dfrac{L_{12}^2}{\lambda}$.
\end{lemma}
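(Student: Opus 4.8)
The plan is to mirror the structure of Lemma~\ref{lemma: smoothness} (the Danskin-type lemma for PL functions), but now exploiting strong concavity instead of the PL/QG machinery. Since $f(\btheta,\cdot)$ is concave and we subtract $\tfrac{\lambda}{2}\|\balpha-\bbalpha\|^2$, the regularized function $\fl(\btheta,\cdot)$ is $\lambda$-strongly concave. Strong concavity guarantees that the inner maximizer $\balpha^*(\btheta) \triangleq \argmax_{\balpha\in\cSa}\fl(\btheta,\balpha)$ is \emph{unique}, so unlike the PL case there is no ambiguity in the optimal solution set. This uniqueness is exactly the hypothesis Danskin's theorem needs, and it immediately yields $\nabla\gl(\btheta) = \nabla_{\btheta}\fl(\btheta,\balpha^*(\btheta)) = \nabla_{\btheta} f(\btheta,\balpha^*(\btheta))$ (the regularizer does not depend on $\btheta$).

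First I would establish a Lipschitz bound on the map $\btheta \mapsto \balpha^*(\btheta)$. The standard estimate is $\|\balpha^*(\btheta_1)-\balpha^*(\btheta_2)\| \leq \frac{L_{12}}{\lambda}\|\btheta_1-\btheta_2\|$; this is the strongly-concave analogue of Lemma~\ref{lemma: stability}, where the factor $\tfrac{1}{2\mu}$ is replaced by $\tfrac{1}{\lambda}$. To derive it, I would use the first-order optimality (variational inequality) conditions at $\balpha^*(\btheta_1)$ and $\balpha^*(\btheta_2)$ over the convex set $\cSa$, add the two inequalities, and invoke $\lambda$-strong concavity on one side together with the $L_{12}$-Lipschitzness of $\nabla_{\balpha} f$ in $\btheta$ (from Assumption~\ref{assumption: LipSmooth-uncons}) on the other. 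This is a routine monotonicity argument.

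With the gradient formula and the Lipschitz bound on $\balpha^*(\cdot)$ in hand, the smoothness of $\gl$ follows by the same telescoping computation as in the last display of Lemma~\ref{lemma: smoothness}:
\[
\arraycolsep=1pt\def\arraystretch{1.4}
\begin{array}{ll}
\|\nabla \gl(\btheta_1) - \nabla \gl(\btheta_2)\|
&= \|\nabla_{\btheta} f(\btheta_1, \balpha^*(\btheta_1)) - \nabla_{\btheta} f(\btheta_2, \balpha^*(\btheta_2))\|\\
&\leq L_{11}\|\btheta_1 - \btheta_2\| + L_{12}\|\balpha^*(\btheta_1) - \balpha^*(\btheta_2)\|\\
&\leq \Big(L_{11} + \dfrac{L_{12}^2}{\lambda}\Big)\|\btheta_1 - \btheta_2\|,
\end{array}
\]
where I insert and subtract $\nabla_{\btheta} f(\btheta_2,\balpha^*(\btheta_1))$, apply the triangle inequality together with the $L_{11}$- and $L_{12}$-Lipschitz bounds of Assumption~\ref{assumption: LipSmooth-uncons}, and finish with the $\frac{L_{12}}{\lambda}$ stability estimate. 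This recovers the claimed constant $L = L_{11} + L_{12}^2/\lambda$.

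The main obstacle is not the final chain of inequalities, which is essentially identical to the PL case, but rather justifying the Danskin-type gradient identity cleanly. In the PL proof the delicate point was that multiple maximizers gave the \emph{same} directional derivative; here uniqueness removes that subtlety, so the real care goes into the Lipschitz-stability estimate for $\balpha^*(\btheta)$ over a \emph{constrained} set $\cSa$ --- one must argue via the variational inequalities rather than by setting $\nabla_{\balpha}\fl = 0$, since the maximizer may lie on the boundary. Once that estimate is secured, everything else is bookkeeping with the Lipschitz constants already assumed.
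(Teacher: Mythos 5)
Your proposal is correct and follows essentially the same route as the paper's proof: Danskin's theorem (justified by the uniqueness of the maximizer under $\lambda$-strong concavity) for the gradient formula, the stability estimate $\|\balpha^*(\btheta_1)-\balpha^*(\btheta_2)\|\leq \frac{L_{12}}{\lambda}\|\btheta_1-\btheta_2\|$ derived from the optimality conditions over $\cSa$ combined with strong concavity and the $L_{12}$-Lipschitz bound, and the final add-and-subtract triangle-inequality computation. The only cosmetic difference is that you phrase the stability step as a gradient-monotonicity argument while the paper uses the function-value form of strong concavity plus optimality, but these yield the identical key inequality.
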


\begin{proof}
	First notice that the differentiability of the function $\gl(\cdot)$ follows directly from Danskin's Theorem~\cite{danskin_result_1995}. It remains to show  that $\gl$ is a Lipschitz smooth function.
	Let 
	\[\balpha_1^* \triangleq \argmax_{\balpha \in \cSa} \;\fl (\btheta_1, \balpha)\quad {\rm  and } \quad \balpha_2^*\triangleq \argmax_{\balpha \in \cSa} \;\fl (\btheta_2, \balpha).\]
	Then by strong convexity of $-\fl(\btheta, \cdot)$, we have
	\[\begin{array}{ll}
	\fl(\btheta_2, \balpha_2^*)& \leq \fl(\btheta_2, \balpha_1^*) 
	+ \langle \nabla_{\balpha} \fl(\btheta_2, \balpha_1^*), \balpha_2^* - \balpha_1^* \rangle - \dfrac{\lambda}{2}\|\balpha_2^* - \balpha_1^*\|^2,\end{array}\]
	and
	\[\begin{array}{ll}\fl(\btheta_2, \balpha_1^*) &\leq  \fl(\btheta_2, \balpha_2^*) + \underbrace{\langle \nabla_{\balpha} \fl(\btheta_2, \balpha_2^*), \balpha_1^* - \balpha_2^* \rangle}_{\leq 0, \mbox{ by optimality of } \balpha_2^*} - \dfrac{\lambda}{2}\|\balpha_2^* - \balpha_1^*\|^2.\end{array}\]
	Adding the two inequalities, we get
	\begin{equation}\label{eq: Lipschtizness-1}
	\langle \nabla_{\balpha} \fl(\btheta_2, \balpha_1^*), \balpha_2^* - \balpha_1^* \rangle \geq \lambda\|\balpha_2^* - \balpha_1^*\|^2.
	\end{equation}
	Moreover, due to optimality of $\balpha_1^*$, we have
	\begin{equation}\label{eq: Lipschtizness-2}
	\langle  \nabla_{\balpha} \fl(\btheta_1, \balpha_1^*) , \balpha_2^* - \balpha_1^* \rangle \leq 0.
	\end{equation}
	Combining~\eqref{eq: Lipschtizness-1} and \eqref{eq: Lipschtizness-2} we obtain
	\begin{equation}\label{eq: Lipschtizness-3}
	\begin{array}{ll}
	\lambda\|\balpha_2^* - \balpha_1^*\|^2 &\leq 
	\langle  \nabla_{\balpha} \fl(\btheta_2, \balpha_1^*) - \nabla_{\balpha} \fl(\btheta_1, \balpha_1^*), \balpha_2^* - \balpha_1^* \rangle\\
	& \leq L_{12} \|\btheta_1 - \btheta_2\|\|\balpha_2^* - \balpha_1^*\|,  	
	\end{array}
	\end{equation}
	where the last inequality holds by Cauchy-Schwartz and the Lipschtizness assumption. We finally show that $\gl$ is Lipschitz smooth. 
	\[\arraycolsep=1pt\def\arraystretch{1.6}
	\begin{array}{ll}
	\|\nabla \gl(\btheta_1) - \nabla \gl(\btheta_2)\| &= \|\nabla_{\btheta} \fl(\btheta_1, \balpha_1^*) - \nabla_{\btheta} \fl(\btheta_2, \balpha_2^*) \|\\
	&= \|\nabla_{\btheta} \fl(\btheta_1, \balpha_1^*) -\nabla_{\btheta} \fl(\btheta_2, \balpha_1^*) +\nabla_{\btheta} \fl(\btheta_2, \balpha_1^*) - \nabla_{\btheta} \fl(\btheta_2, \balpha_2^*) \|\\
	&\leq L_{11}\|\btheta_1 - \btheta_2\| + L_{12}\|\balpha_1^* - \balpha_2^*\|\\
	&\leq \Big(L_{11} + \dfrac{L_{12}^2}{\lambda}\Big)\|\btheta_1 - \btheta_2\|,	
	\end{array}\]
	where the last inequality holds by~\eqref{eq: Lipschtizness-3}.	
\end{proof}

Algorithm~\ref{alg} solves the  inner maximization problem using accelerated projected gradient descent (outlined in Algorithm~\ref{alg-APGF}). The next lemma is known for accelerated projected gradient descent when applied to strongly convex functions.

\begin{lemma}\label{lm:projected-GD}
	 Assume $h(\bx)$ is $\lambda$-strongly convex and $L$-smooth. Then, applying accelerated projected gradient descent algorithm~\cite{beck2009fast} with step-size $1/L$ and restart parameter $N\triangleq \sqrt{8L/\lambda} -1$ for $K$ iterations, we get $\bx_K$ such that
	\begin{equation}\label{eq:projected-GD}
	h(\bx_K) - h(\bx^*) \leq \left(\dfrac{1}{2}\right)^{K/N}(h(\bx_0) - h(\bx^*)),
	\end{equation}
	where $\bx^* \triangleq \argmin_{\bx \in {\cal F}} h(\bx)$.
\end{lemma}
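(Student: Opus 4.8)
The plan is to exploit the well-known $\mathcal{O}(1/k^2)$ convergence rate of the non-restarted accelerated projected gradient method together with strong convexity, and to show that the periodic restart every $N$ iterations upgrades this sublinear rate into a linear one. Concretely, I would view the algorithm as a sequence of ``epochs'', each consisting of $N$ inner iterations of FISTA launched from the previous epoch's output with the momentum parameter reset ($\gamma_1 = 1$, $\by_1 = \bx_N$). The goal is to prove that a single epoch contracts the optimality gap by a factor of at least $1/2$, and then to chain these contractions across the outer loop.

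First I would invoke the standard accelerated gradient guarantee of~\cite{beck2009fast}: for an $L$-smooth convex $h$, running $N$ accelerated steps with step-size $1/L$ from a point $\bx_0$ yields $h(\bx_N) - h(\bx^*) \leq \frac{2L}{(N+1)^2}\|\bx_0 - \bx^*\|^2$. This is the only external ingredient and it requires merely convexity and smoothness, both of which are available within each epoch since strong convexity is not used by the accelerated iteration itself.

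Next I would convert the distance term $\|\bx_0 - \bx^*\|^2$ into a function-value gap using $\lambda$-strong convexity together with first-order optimality of $\bx^*$ over the feasible set, which gives $\frac{\lambda}{2}\|\bx_0 - \bx^*\|^2 \leq h(\bx_0) - h(\bx^*)$. Substituting this into the FISTA bound produces the per-epoch estimate $h(\bx_N) - h(\bx^*) \leq \frac{4L}{\lambda(N+1)^2}\big(h(\bx_0) - h(\bx^*)\big)$. With the prescribed restart length $N = \sqrt{8L/\lambda} - 1$ we have $(N+1)^2 = 8L/\lambda$, so the contraction factor equals exactly $\frac{4L}{\lambda \cdot 8L/\lambda} = \frac12$; that is, each epoch halves the optimality gap.

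Finally I would iterate this one-epoch contraction. Because each restart reinitializes the momentum and feeds the previous output as the new warm start, applying the per-epoch bound across the epochs of the outer loop telescopes into $h(\bx_K) - h(\bx^*) \leq (1/2)^{K/N}\big(h(\bx_0) - h(\bx^*)\big)$; the only bookkeeping needed is that the integer number of completed epochs is at least $K/N$, which holds since the loop runs $\lfloor K/N\rfloor + 1$ times. The conceptual crux, and the reason the restart is essential, is that plain accelerated gradient descent is \emph{not} linearly convergent on strongly convex objectives unless the momentum is tuned to $\lambda$; the restart sidesteps this by periodically collapsing the accumulated momentum, and the calibration $N = \sqrt{8L/\lambda}-1$ is precisely what balances the quadratic-in-$N$ acceleration gain against the target factor $1/2$. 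The remaining steps are routine substitutions.
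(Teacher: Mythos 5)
Your proposal is correct and follows essentially the same route as the paper's proof: invoke the FISTA rate of \cite[Theorem~4.4]{beck2009fast} per restart epoch, convert $\|\bx_0-\bx^*\|^2$ to a function gap via $\lambda$-strong convexity and optimality of $\bx^*$, observe that the choice $N=\sqrt{8L/\lambda}-1$ makes the per-epoch contraction factor exactly $\tfrac12$, and chain over epochs. Your remark on counting epochs ($\lfloor K/N\rfloor+1$ runs of the inner loop) is a minor bookkeeping point the paper leaves implicit, but nothing substantive differs.
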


\begin{proof}
	According to \cite[Theorem~4.4]{beck2009fast}, we have
	\begin{equation}\label{eq:projected-GD-1}
	\begin{array}{ll}
	h(\bx_{iN}) - h(\bx^*) &\leq \dfrac{2L}{(N+1)^2}\|\bx_{(i-1)N} - \bx^*\|^2\\
	\\
	&\leq \dfrac{4L}{\lambda(N+1)^2}\big(h(\bx_{(i-1)N}) - h(\bx^*)\big)\\
	\\
	&\leq \dfrac{1}{2}\big(h(\bx_{(i-1)N}) - h(\bx^*)\big),
	\end{array}
	\end{equation}
	where the second inequality holds by strong convexity of $h$ and the optimality condition of $x^*$, and the last inequality holds by our choice of $N$. This yields,
	\begin{equation}\label{eq:projected-GD-2}
	h(\bx_K) - h(\bx^*) \leq (\dfrac{1}{2})^{K/N}\big(h(\bx_0) - h(\bx^*)\big),
	\end{equation}
	which completes our proof.
\end{proof}

\subsection{Proof of Theorem~\ref{thm: FW}}\label{App: FW}
We first show that the inner loop in Algorithm~\ref{alg} computes an approximate gradient of $\gl(\cdot)$. In other words, $\nabla_{\btheta} \fl(\btheta_t,\balpha_{t+1}) \approx \nabla \gl(\btheta_t)$.

\begin{lemma}\label{lm:Updating-alpha}
	Define $\kappa = \dfrac{L_{22}}{\lambda} \geq 1$ and assume $\gl(\btheta_t) - \fl(\btheta_t, \balpha_0(\btheta_t)) < \Delta$, then for any prescribed $\varepsilon \in (0,1)$ if we choose $K$ large enough such that 
	\begin{equation}\label{K-value}
	K \geq N_K(\varepsilon) \triangleq \dfrac{\sqrt{8\kappa}}{\log{2}} \big(4\log(1/\varepsilon) +\log(2^{17} \bar{L}^6\bar{R}^6\Delta/L^2\lambda) \big),
	\end{equation}
	
	where $\bar{L} \triangleq \max\{L_{12}, L_{22},L, g_{max}, 1\}$ and $\bar{R} = \max\{R,1\}$, then the error $e_t \triangleq \nabla_{\btheta} \fl(\btheta_t, \balpha_K(\btheta_t)) - \nabla \gl(\btheta)$ has a norm 
	\begin{equation}
	\|e_t\|\leq \delta \triangleq \dfrac{L\varepsilon^2}{2^6 R(g_{max}+LR)^2}
	\end{equation}
	and
	\begin{equation}
	\begin{split}
	\dfrac{\varepsilon}{2} \geq \cY_{t,K} \triangleq  &\max_{s}\,\,  \big\langle \nabla_{\balpha} \fl(\btheta_t, \balpha_K(\btheta_t)), s \big\rangle   \\
	&\quad{\st}  \quad \balpha_K(\btheta_t) + s \in \cSa, \, \|s\|\leq 1
	\end{split}.
	\end{equation}
\end{lemma}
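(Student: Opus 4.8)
The plan is to follow the same template as the PL-game analysis in Lemma~\ref{lemma: conv_alpha_main}, replacing the gradient-descent and PL/QG ingredients with their strongly-concave-and-accelerated counterparts. Write $\balpha^*$ for the maximizer $\balpha^*(\btheta_t) = \argmax_{\balpha \in \cSa} \fl(\btheta_t, \balpha)$, which is now \emph{unique} because $\fl(\btheta_t, \cdot) = f(\btheta_t,\cdot) - \tfrac{\lambda}{2}\|\cdot - \bbalpha\|^2$ is $\lambda$-strongly concave. First I would invoke Lemma~\ref{lm:projected-GD} on the inner maximization of $\fl(\btheta_t, \cdot)$ (equivalently, minimizing the $\lambda$-strongly convex, $L_{22}$-smooth function $-\fl(\btheta_t,\cdot)$) run with restart parameter $N \approx \sqrt{8\kappa}$. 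Using the initial-gap hypothesis $\gl(\btheta_t) - \fl(\btheta_t,\balpha_0(\btheta_t)) < \Delta$, this yields the function-value contraction
\[
\gl(\btheta_t) - \fl(\btheta_t, \balpha_K(\btheta_t)) \leq \left(\tfrac{1}{2}\right)^{K/N}\Delta.
\]

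Next I would convert this value gap into an iterate-distance bound. Here strong concavity plays the role that Lemma~\ref{lemma: QG} (PL $\Rightarrow$ QG) played before: since $\gl(\btheta_t) = \fl(\btheta_t, \balpha^*)$ and $\fl(\btheta_t,\cdot)$ is $\lambda$-strongly concave, $\gl(\btheta_t) - \fl(\btheta_t,\balpha_K(\btheta_t)) \geq \tfrac{\lambda}{2}\|\balpha_K(\btheta_t) - \balpha^*\|^2$, so $\|\balpha_K(\btheta_t) - \balpha^*\| \leq (\tfrac{1}{2})^{K/(2N)}\sqrt{2\Delta/\lambda}$. The gradient-error bound then follows exactly as in the PL case: by Lemma~\ref{lm:g-smooth} (a Danskin-type identity) $\nabla \gl(\btheta_t) = \nabla_{\btheta}\fl(\btheta_t,\balpha^*)$, so the $L_{12}$-Lipschitzness of $\nabla_{\btheta}\fl$ in $\balpha$ (Assumption~\ref{assumption: LipSmooth-uncons}) gives
\[
\|e_t\| = \|\nabla_{\btheta}\fl(\btheta_t,\balpha_K(\btheta_t)) - \nabla_{\btheta}\fl(\btheta_t,\balpha^*)\| \leq L_{12}\|\balpha_K(\btheta_t) - \balpha^*\|.
\]
I would then verify that the stated choice of $K$ drives this below $\delta$, with $(1/2)^{K/N}$ playing the role of the geometric contraction factor and $\sqrt{8\kappa}/\log 2$ the role of its inverse log-rate; squaring the target and bounding $L_{12}, R, g_{max}$ by $\bar{L},\bar{R}$ reduces this to the same elementary logarithmic inequality checked in Lemma~\ref{lemma: conv_alpha_main}, with $2^{15}$ upgraded to $2^{17}$ to absorb the extra $2\Delta/\lambda$ factor.

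The genuinely new piece is the bound on the constrained optimality measure $\cY_{t,K}$, since---unlike the unconstrained PL case where $\nabla_{\balpha} f(\btheta_t,\balpha^*) = 0$---here $\balpha^*$ may lie on the boundary of $\cSa$ with nonzero gradient. Writing $\balpha_K$ for $\balpha_K(\btheta_t)$, for any feasible $s$ (with $\|s\|\leq 1$ and $\balpha_K+s \in \cSa$) I would set $\balpha = \balpha_K + s \in \cSa$ and split
\begin{align*}
\langle \nabla_{\balpha} \fl(\btheta_t,\balpha_K), s\rangle &= \underbrace{\langle \nabla_{\balpha} \fl(\btheta_t,\balpha^*), \balpha - \balpha^*\rangle}_{\leq 0} + \langle \nabla_{\balpha} \fl(\btheta_t,\balpha^*), \balpha^*-\balpha_K\rangle \\
&\quad + \langle \nabla_{\balpha} \fl(\btheta_t,\balpha_K) - \nabla_{\balpha} \fl(\btheta_t,\balpha^*), s\rangle.
\end{align*}
The first bracket is nonpositive by the first-order optimality of the constrained maximizer $\balpha^*$; the second is at most $g_{\balpha}\|\balpha^*-\balpha_K\|$ using the constant $g_{\balpha}$ from~\eqref{def:g_max}; and the third is at most $L_{22}\|\balpha_K-\balpha^*\|$ by Cauchy--Schwarz, $\|s\|\leq 1$, and $L_{22}$-smoothness. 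Hence $\cY_{t,K} \leq (g_{max}+L_{22})\|\balpha_K-\balpha^*\|$, and the same distance bound with the chosen $K$ forces $\cY_{t,K} \leq \varepsilon/2$. I expect the main obstacle to be exactly this last step: setting up the measure so the boundary gradient of $\balpha^*$ is \emph{controlled} by $g_{\balpha}$ rather than dropped, and keeping the feasibility bookkeeping ($\balpha_K+s$ versus $\balpha^*+s$) consistent across the three terms. The remaining algebra---matching the $\delta$ and $\varepsilon/2$ thresholds against the explicit $N_K(\varepsilon)$---is routine and parallels Lemma~\ref{lemma: conv_alpha_main}.
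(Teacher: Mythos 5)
Your proposal is correct and follows essentially the same route as the paper: accelerated-gradient contraction (Lemma~\ref{lm:projected-GD}) $\to$ strong-concavity distance bound $\to$ $L_{12}$-Lipschitz bound on $\|e_t\|$, and for $\cY_{t,K}$ a three-way split that is equivalent to the paper's two-step decomposition in~\eqref{eq:Updating-alpha-4}--\eqref{eq:Updating-alpha-5}, using first-order optimality of $\balpha^*$, the $g_{max}$ bound on $\|\nabla_{\balpha}\fl(\btheta_t,\balpha^*)\|$, and smoothness (your version is in fact slightly cleaner, as it never needs $\|\balpha^*-\balpha_K(\btheta_t)\|\leq 1$ for feasibility). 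The only slip is that the Lipschitz constant of $\nabla_{\balpha}\fl(\btheta_t,\cdot)$ is $L_{22}+\lambda$, not $L_{22}$; since $\kappa = L_{22}/\lambda\geq 1$ this is at most $2L_{22}$, giving the paper's constant $(2L_{22}+g_{max})\leq 3\bar{L}$, which the chosen $K$ absorbs with room to spare.
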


\begin{proof}
    	Starting from Lemma~\ref{lm:projected-GD}, we have that
	\begin{equation}\label{eq:Updating-alpha-1}
	\gl(\btheta_t) - \fl (\btheta_t, \balpha_K(\btheta_t))\leq \dfrac{1}{2^{\frac{K}{\sqrt{8\kappa}}}}\Delta.
	\end{equation}

Let $\balpha^*(\btheta_t) \triangleq \argmax_{\balpha \in \cSa} \; \fl(\btheta_t, \balpha)$. Then by strong convexity of $-f(\btheta_t, \cdot)$, we get

	\begin{equation}\label{eq:Updating-alpha-2}
	\dfrac{\lambda}{2}\|\balpha_K(\btheta_t) - \balpha^*(\btheta_t)\|^2 \leq   \gl(\btheta_t) - \fl(\btheta_t, \balpha_K(\btheta_t)) \leq \dfrac{1}{2^{\frac{K}{\sqrt{8\kappa}}}}\Delta.
	\end{equation}

	Combined with the Lipschitz smoothness property of the objective, we obtain

	\begin{equation}\label{eq:Updating-alpha-3}
	\arraycolsep=1pt\def\arraystretch{2}
	\begin{array}{ll}
	\|e_t\|&=\|\nabla_{\btheta} \fl(\btheta_t, \balpha_K(\btheta_t)) - \nabla \gl(\btheta_t)\|\\
	& =\|\nabla_{\btheta} \fl(\btheta_t, \balpha_K(\btheta_t)) - \nabla_{\btheta} \fl(\btheta_t, \balpha^*(\btheta_t))\| \\
	&\leq L_{12} \|\balpha_K(\btheta_t) - \balpha^*(\btheta_t)\|\\
	&\leq  \dfrac{L_{12}}{2^{K/2\sqrt{8\kappa}}} \sqrt{\dfrac{2\Delta}{\lambda}}\\
	&\leq \dfrac{L\varepsilon^2}{2^6 R(g_{max}+LR)^2}
	\end{array}
	\end{equation}
		
	where the second inequality uses~\eqref{eq:Updating-alpha-2}, and the third inequality uses the choice of $K$ in \eqref{K-value} which yields
	yields
	\[\begin{array}{l}
	\log \left(2^{K/\sqrt{8 \kappa}}\right) \geq \log \left( 1 / \varepsilon\right)^4  + \log \left(2^{17} \bar{L}^6 \bar{R}^6 \Delta / L^2\lambda \right)  = \log\left(2^{17} \bar{L}^6 \bar{R}^6 \Delta / L^2\lambda\varepsilon^4 \right)
\end{array}\]
which implies,
\[\begin{array}{l}
\Big({\dfrac{1}{2}}\Big)^{K/2\sqrt{8 \kappa}} \leq \dfrac{L\varepsilon^2 \sqrt{\lambda}}{2^{6}\sqrt{2 \Delta} \bar{R} \bar{L}(2\bar{L}\bar{R})^2} \leq \dfrac{L\varepsilon^2 \sqrt{\lambda}}{2^{6}\sqrt{2\Delta} R \bar{L}(\bar{L} + \bar{L}R)^2}
	\leq \dfrac{L\varepsilon^2 \sqrt{\lambda}}{2^{6}\sqrt{2\Delta} R \bar{L}(g_{max} + LR)^2}.
    \end{array}\]

	Here the second inequality holds since $\bar{R}\geq 1$, and the third inequality holds since $g_{max}\leq \bar{L}$.
	To prove the second argument of the lemma, we also use the Lipschitz smoothness property of the objective to get
	\begin{equation}
	\arraycolsep=1pt\def\arraystretch{1.6}
	\begin{array}{ll}
	\big\langle \nabla_{\balpha} \fl(\btheta_t, \balpha_K(\btheta_t)), s \big\rangle	&= \big\langle \nabla_{\balpha} \fl(\btheta_t, \balpha_K(\btheta_t)) - \nabla_{\balpha} \fl(\btheta_t, \balpha^*(\btheta_t)), s \big\rangle + \big\langle \nabla_{\balpha} \fl(\btheta_t, \balpha^*(\btheta_t)), s \big\rangle\\
	&\leq \|\nabla_{\balpha} \fl(\btheta_t, \balpha_K(\btheta_t)) - \nabla_{\balpha} \fl(\btheta_t, \balpha^*(\btheta_t))\|\|s\|+\big\langle \nabla_{\balpha} \fl(\btheta_t, \balpha^*(\btheta_t)), s \big\rangle\\
	&\leq (L_{22} + \lambda)\|\balpha^*(\btheta_t) - \balpha_K(\btheta_t))\|\|s\| +\big\langle \nabla_{\balpha} \fl(\btheta_t, \balpha^*(\btheta_t)), s \big\rangle.\\
	&\leq 2L_{22}\|\balpha^*(\btheta_t) - \balpha_K(\btheta_t))\|\|s\| +\big\langle \nabla_{\balpha} \fl(\btheta_t, \balpha^*(\btheta_t)), s \big\rangle,\label{eq:Updating-alpha-4}
	\end{array}
	\end{equation}
	where the second inequality holds by our Lipschitzness assumption and the last inequality holds by our assumption that $L_{22}/\lambda \geq 1$. Moreover, 
	\begin{equation}\label{eq:Updating-alpha-5}
	\begin{array}{ll}
	\begin{array}{ll}
	\min_{s}\,\, & -\big\langle \nabla_{\balpha} \fl(\btheta_t, \balpha^*(\btheta_t)), s \big\rangle   \\
	\st \quad & \balpha_K(\btheta_t) + s \in \cSa, \, \|s\|\leq 1\end{array}	&=	\begin{array}{ll}
	\min_{\balpha}\,\, & -\big\langle \nabla_{\balpha} \fl(\btheta_t, \balpha^*(\btheta_t)), \balpha - \balpha_K(\btheta_t) \big\rangle   \\
	\st \quad & \balpha \in \cSa, \, \|\balpha - \balpha_K(\btheta_t)\|\leq 1\end{array}\\
	\\
	\\
	&=-\big\langle \nabla_{\balpha} \fl(\btheta_t, \balpha^*(\btheta_t)), \balpha^*(\btheta_t) - \balpha_K(\btheta_t) \big\rangle\\ 
	\\ & \quad - \underbrace{\begin{array}{ll}
		\max_{\balpha}\,\, & \big\langle \nabla_{\balpha} \fl(\btheta_t, \balpha^*(\btheta_t)), \balpha - \balpha^*(\btheta_t) \big\rangle  \\
		\st \quad & \balpha \in \cSa, \, \|\balpha - \balpha_K(\btheta_t)\|\leq 1\end{array}}_{0}\\
	\\
	&=-\big\langle \nabla_{\balpha} \fl(\btheta_t, \balpha^*(\btheta_t)), \balpha^*(\btheta_t) - \balpha_K(\btheta_t) \big\rangle 
	\end{array},
	\end{equation}
	where the last equality holds since $\balpha^*(\btheta_t)$ is optimal and  $\|\balpha^*(\btheta_t) - \balpha_K(\btheta_t)\| \leq 1.$ Combining~\eqref{eq:Updating-alpha-4} and \eqref{eq:Updating-alpha-5}, we get	
	\begin{equation}\label{eq:Updating-alpha-6}
	\begin{array}{l}
	\begin{array}{ll}
	\min_{s}\,\, & -\big\langle \nabla_{\balpha} \fl(\btheta_t, \balpha_K(\btheta_t)), s \big\rangle   \\
	\st \quad & \balpha_K(\btheta_t) + s \in \cSa, \, \|s\|\leq 1\end{array}\geq -\big(\|\nabla_{\balpha} \fl(\btheta_t, \balpha^*(\btheta_t))\| + 2L_{22}\big)\|\balpha_K(\btheta_t) - \balpha^*\|.
	\end{array}
	\end{equation}
	Hence, using~\eqref{def:g_max}, we get
	\begin{equation}\label{eq:Updating-alpha-7}
	\begin{array}{ll}
	\cY_{t,K}  &\leq \big( 2L_{22} + g_{max} \big)\|\balpha_K(\btheta_t) - \balpha^*\|\\
	\\
	&\leq \dfrac{3\bar{L}}{2^{K/2\sqrt{8\kappa}}}\sqrt{\dfrac{2\Delta}{\lambda}}\\
	\\
	&\leq \dfrac{\varepsilon}{2}, \\
	\end{array}
	\end{equation}
	where the second inequality uses~\eqref{eq:Updating-alpha-2}, and the last inequality holds by our choice of $K$ in \eqref{K-value} and since $\varepsilon \in (0,1)$.
\end{proof}

The above lemma implies that $ \|\nabla_{\btheta} \fl(\btheta_t, \balpha_K(\btheta_t)) - \nabla \gl(\btheta_t)\|\leq \delta \triangleq \dfrac{L\varepsilon^2}{64\bar{R}^3\bar{L}^2}  $. We now show that our assumption $ g(\btheta_t)-f(\btheta_t, \balpha_0(\btheta_t))\leq \Delta$ for all t in the above Lemma holds. Let 
\[\balpha_{t+1}^* \triangleq \argmax_{\balpha \in \cSa} \;\fl (\btheta_{t+1}, \balpha)\quad {\rm  and } \quad \balpha_t^*\triangleq \argmax_{\balpha \in \cSa} \;\fl (\btheta_t, \balpha).\]
Then by strong convexity of $-\fl(\btheta, \cdot)$, we have
\[\begin{array}{ll}
\fl(\btheta_{t+1}, \balpha_{t+1}^*)& \leq \fl(\btheta_{t+1}, \balpha_t^*) 
+ \langle \nabla_{\balpha} \fl(\btheta_{t+1}, \balpha_t^*), \balpha_{t+1}^* - \balpha_t^* \rangle - \dfrac{\lambda}{2}\|\balpha_{t+1}^* - \balpha_t^*\|^2,\end{array}\]
and
\[\begin{array}{ll}\fl(\btheta_{t+1}, \balpha_t^*) &\leq  \fl(\btheta_{t+1}, \balpha_{t+1}^*) + \underbrace{\langle \nabla_{\balpha} \fl(\btheta_{t+1}, \balpha_{t+1}^*), \balpha_t^* - \balpha_{t+1}^* \rangle}_{\leq 0, \mbox{ by optimality of } \balpha_{t+1}^*} - \dfrac{\lambda}{2}\|\balpha_{t+1}^* - \balpha_t^*\|^2.\end{array}\]
Adding the two inequalities, we get
\begin{equation}\label{eq: Lipschtizness-21}
\langle \nabla_{\balpha} \fl(\btheta_{t+1}, \balpha_t^*), \balpha_{t+1}^* - \balpha_t^* \rangle \geq \lambda\|\balpha_{t+1}^* - \balpha_t^*\|^2.
\end{equation}
Moreover, due to optimality of $\balpha_t^*$, we have
\begin{equation}\label{eq: Lipschtizness-22}
\langle  \nabla_{\balpha} \fl(\btheta_t, \balpha_t^*) , \balpha_{t+1}^* - \balpha_t^* \rangle \leq 0.
\end{equation}
Combining~\eqref{eq: Lipschtizness-21} and \eqref{eq: Lipschtizness-22} we obtain
\begin{equation}
\begin{array}{ll}
\lambda\|\balpha_{t+1}^* - \balpha_t^*\|^2 &\leq 
\langle  \nabla_{\balpha} \fl(\btheta_{t+1}, \balpha_t^*) - \nabla_{\balpha} \fl(\btheta_t, \balpha_t^*), \balpha_{t+1}^* - \balpha_t^* \rangle\\
& \leq L_{12} \|\btheta_t - \btheta_{t+1}\|\|\balpha_{t+1}^* - \balpha_t^*\|,  	
\end{array}
\end{equation}
Thus,
\[	\|\balpha_{t+1}-\balpha_{t}\|\leq \dfrac{L_{12}}{\lambda}\|\btheta_{t+1}-\btheta_{t}\|.\]
 Hence, the difference between consecutive optimal solutions computed by the inner loop of the algorithm, are upper bounded by the difference between corresponding $\btheta$'s. Since $\cSt$ is a compact set, we can find an upper bound $\Delta$ such that $ g(\btheta_t)-f(\btheta_t, \alpha_0(\btheta_t))\leq \Delta$, for all $t$.\\

 We are now ready to show the main theorem that implies convergence of our proposed algorithm to an $\varepsilon$--first-order stationary solution of problem~\eqref{eq: game1-cons}. In particular, we show that using $\nabla_{\btheta} \fl(\btheta_t, \balpha_K(\btheta_t))$ instead of $\nabla \gl(\btheta_t)$ for a small enough $\lambda$ in the Frank-Wolfe or projected descent algorithms applied to $\gl$, finds an $\varepsilon$--FNE. We are now ready to show Theorem~\ref{thm: FW}.

\begin{proof}

	\textbf{Frank-Wolfe Steps:}
	We now show the result when Step~7 of Algorithm~\ref{alg} sets
	\[\btheta_{t+1} = \btheta_t + \dfrac{{\cX}_t}{\widetilde{L}}\widehat{s}_t.\]
	Using descent lemma on $\gl$ and the definition of $\widetilde{L}$ in Algorithm~2, we have
	\begin{equation}\label{eq:FW-1}
	\begin{array}{ll}
	&\gl(\btheta_{t+1})\leq \gl(\btheta_t)  + \big\langle \nabla \gl(\btheta_t), \btheta_{t+1} - \btheta_t  \big\rangle  + \dfrac{\widetilde{L}}{2}\|\btheta_{t+1} - \btheta_t\|^2\\
	\\
	&\quad =  \gl(\btheta_t)  + \dfrac{\cX_t}{\widetilde{L}}\big\langle \nabla \gl(\btheta_t), \widehat{s}_t \big\rangle + \dfrac{\cX_t^2}{2\widetilde{L}}\|\widehat{s}_t\|^2\\
	\\
	&\quad \leq  \gl(\btheta_t)  + \dfrac{\cX_t}{\widetilde{L}}\big\langle \nabla \gl(\btheta_t), \widehat{s}_t \big\rangle + \dfrac{\cX_t^2}{2\widetilde{L}}\\
	\\
	&\quad  =  \gl(\btheta_t)  - \dfrac{\cX_t}{\widetilde{L}}\big\langle \underbrace{\nabla_{\btheta} \fl \big(\btheta_t, \balpha_K(\btheta_t)\big) - \nabla \gl(\btheta_t)}_{e_t}, \widehat{s}_t \big\rangle - \dfrac{\cX_t^2}{2\widetilde{L}}\\
	\\
	&\quad  \leq  \gl(\btheta_t)  + \dfrac{\cX_t}{\widetilde{L}}\|e_t\| - \dfrac{\cX_t^2}{2\widetilde{L}}\\
	\end{array}
	\end{equation}

	%
	%
	where $\widehat{s}_t$ and $\cX_t$  are defined in equations~\eqref{eq: X_t}~and~\eqref{eq: s_t} of the manuscript, and the second and last inequalities use the fact that $\|\widehat{s}_t\| \leq 1$.\\
	
	Summing up these inequalities for all values of $t$ leads to
	\begin{equation}\label{eq: Summing-up}
	    \dfrac{1}{T}\sum_{t=0}^{T-1} \cX_{t}^2 \leq \dfrac{2\widetilde{L}\Delta}{T} + 4\|e_t\|g_{max}\leq \dfrac{2\widetilde{L}\Delta}{T} + \dfrac{\varepsilon^2}{4} \leq \dfrac{\varepsilon^2}{2}, 
	 \end{equation}
	where the first inequality holds since 
	\[\begin{array}{ll}
	\cX_t &= \big\langle \nabla_{\btheta} \fl \big(\btheta_t, \balpha_K(\btheta_t)\big) - \nabla_{\btheta} \fl \big(\btheta_t, \balpha^*(\btheta_t)\big) + \nabla_{\btheta} \fl \big(\btheta_t, \balpha^*(\btheta_t)\big) , \widehat{s}_t \big\rangle\\
	\\
	& \leq g_{max} + \|e_t\|\\
	\\
	& \leq 2g_{max}.
	\end{array}\]
	Here the first inequality in~\eqref{eq: Summing-up} holds by \eqref{def:g_max}, Cauchy-Schwartz, and the fact that $\|\widehat{s}_t\|\leq 1$. The last inequality holds by our choice of $K$ in Lemma~\ref{lm:Updating-alpha} 
	\[K \geq N_K(\varepsilon) \triangleq \dfrac{\sqrt{8\kappa}}{\log{2}} \big(4\log(1/\varepsilon) +\log(2^{17} \bar{L}^6\bar{R}^6\Delta/L^2\lambda) \big), \]
	which yields $\|e_t\| \leq 1 \leq g_{max}$ and by choosing $T$ such that
	\[T \geq N_T(\varepsilon) \triangleq \dfrac{8\widetilde{L}\Delta}{\varepsilon^2}.\]
	Therefore, using Lemma~\ref{lm:Updating-alpha}, there exists at least one index $\widehat{t}$ for which 
	
	\begin{equation}\label{eq:FW-2}
	\cX_{\widehat{t}} \leq \varepsilon  \quad {\rm and} \quad \cY_{\widehat{t},K} \leq \dfrac{\varepsilon}{2}.
	\end{equation}
	Hence,
	
	\begin{equation}\label{eq: FW-3}
	\begin{array}{ll}
	\arraycolsep=1.5pt\def\arraystretch{1.4}
	\cY(\btheta_{\widehat{t}},\balpha_K(\btheta_{\widehat{t}})) &=
	\begin{array}{ll}
	&\max_{s}\,\, \langle \nabla_{\balpha} f(\btheta_{\widehat{t}}, \balpha_K(\btheta_{\widehat{t}})), s \rangle\\
	&\quad   \st \quad  \balpha_K(\btheta_{\widehat{t}}) + s \in \cSa, \, \|s\|\leq 1
	\end{array}
	\\
	\\
	\arraycolsep=1.5pt\def\arraystretch{1.4}
	&=\begin{array}{ll}&\max_{s}\,\, \langle \nabla_{\balpha} \fl(\btheta_{\widehat{t}}, \balpha_K(\btheta_{\widehat{t}})), s \rangle  + \lambda (\balpha_K(\btheta_{\widehat{t}}) -\bbalpha)^Ts\\
	&\quad  \st \quad  \balpha_K(\btheta_{\widehat{t}}) + s \in \cSa, \, \|s\|\leq 1
	\end{array}
	\\\\
	&\leq \cY_{{\widehat{t}},K} + \lambda\|\balpha_K(\btheta_{\widehat{t}}) -\bbalpha\|\\
	\\
	&\leq \varepsilon
	\end{array},
	\end{equation}
	where the first inequality uses Cauchy Shwartz and the fact that $\|s\|\leq 1$, and the last inequality holds due to~\eqref{eq:FW-2}, the choice of $\lambda$ in the theorem and our assumption that $\|\balpha_K(\btheta_{\widehat{t}}) -\bbalpha\| \leq 2R$.\\
	
	\vspace{0.2 in}
	
	\textbf{Projected Gradient Descent:}\\
	We start by defining
	\[\Delta_g = \gl(\btheta_0)-g^*,\]
	where $\gl^* \triangleq \min_{\btheta} \;\gl(\btheta)$ is the optimal value of $\gl$. Note that by the compactness assumption of the set~$\cSt$, we have $\Delta_g = \gl(\theta_0)-\gl^* < \infty$. \\
	
	We now show the result when Step~7 of Algorithm~\ref{alg} sets
	\[\btheta_{t+1} = \proj_{\cSt} \, \big(\btheta_t - \dfrac{1}{L}\nabla_{\btheta} f_{\lambda} (\btheta_t, \balpha_K(t)) \big),\]
	
	Based on the projection property, we know that 
	\[\big\langle \btheta_t - \dfrac{1}{L}\nabla_{\btheta}f(\btheta_t, \balpha_{t+1}) - \btheta_{t+1} , \btheta - \btheta_{t+1} \big\rangle  \leq 0 \quad \forall \; \; \btheta \in \cSt.\]
	Therefore, by setting $\btheta=\btheta_t$, we get
	\[\big\langle \nabla_{\btheta}f(\btheta_t, \balpha_{t+1}) , \btheta_{t+1} - \btheta_{t} \big\rangle  \leq -L\|\btheta_t - \btheta_{t+1}\|^2,\]
	which implies
	
	\begin{equation}\label{eq: PGD-1}
	\arraycolsep=1pt\def\arraystretch{1.4}
	\begin{array}{ll}
	\big\langle \nabla_{\btheta}f\big(\btheta_t, \balpha^*(\btheta_t)\big) , \btheta_{t+1} - \btheta_{t} \big\rangle &\leq -L\|\btheta_t - \btheta_{t+1}\|^2 + \big\langle \nabla_{\btheta}f\big(\btheta_t, \balpha^*(\btheta_t)\big) -\nabla_{\btheta}f\big(\btheta_t, \balpha_{t+1}\big) , \btheta_{t+1} - \btheta_{t} \big\rangle\\\\
	&= -L\|\btheta_t - \btheta_{t+1}\|^2 + \langle e_t,\, \btheta_t - \btheta_{t+1}\rangle
	\end{array}
	\end{equation}
	where $\balpha^*(\btheta_t) \triangleq \argmax_{\balpha \in \cSa} \, \fl(\btheta_t, \balpha)$ and $e_t\triangleq \nabla_{\btheta}f\big(\btheta_t, \balpha_{t+1}\big) -\nabla_{\btheta}f\big(\btheta_t, \balpha^*(\btheta_t)\big) $.
	
	By Taylor expansion, we have
	\begin{equation}\label{eq: PGD-2}
	\arraycolsep=1pt\def\arraystretch{1.4}
	\begin{array}{ll}
	\gl(\btheta_{t+1}) &\leq \gl(\btheta_t) + \big\langle \nabla_{\btheta} f\big(\btheta_t, \balpha^*(\btheta_t)\big), \btheta_{t+1} - \btheta_t \big\rangle  + \dfrac{L}{2}\|\btheta_{t+1} - \btheta_t\|^2\\\\
	&\leq \gl(\btheta_t) -\dfrac{L}{2}\|\btheta_{t+1} - \btheta_t\|^2 + \langle e_t, \btheta_t - \btheta_{t+1}\rangle.
	\end{array}
	\end{equation}
	
	Moreover, by the projection property, we know that
	\[\big\langle \nabla_{\btheta}f(\btheta_t, \balpha_{t+1}) , \btheta - \btheta_{t+1} \big\rangle  \geq L\big\langle \btheta_t -\btheta_{t+1}, \btheta - \btheta_{t+1} \big\rangle,\]
	which implies
	\begin{equation}
	\arraycolsep=1pt\def\arraystretch{1.4}
	\begin{array}{ll}
	\big\langle \nabla_{\btheta}f(\btheta_t, \balpha_{t+1}) , \btheta - \btheta_{t} \big\rangle  &\geq \big\langle \nabla_{\btheta}f(\btheta_t, \balpha_{t+1}) , \btheta_{t+1} - \btheta_t \big\rangle+ L\big\langle \btheta_t -\btheta_{t+1}, \btheta - \btheta_{t+1} \big\rangle\\\\
	& \geq -(g_{max} + 2LR + \|e_t\|)\|\btheta_{t+1} - \btheta_t\|\\\\
	& \geq -2(g_{max} + LR)\|\btheta_{t+1} - \btheta_t\|.
	\end{array}
	\end{equation}
	Here the second inequality holds by Cauchy-Schwartz, the definition of $e_t$ and our assumption that $\cSt \subseteq {\cal B}_R$. Moreover, the last inequality holds by our choice of $K$ in Lemma~\ref{lemma: conv_alpha_main} which yields
	\begin{align}
	\|e_t\|&= \|\nabla_{\btheta} f(\btheta_t, \balpha_K(\btheta_t)) - \nabla g(\btheta)\|\\
	& \leq   L_{12}\|\balpha_K(\btheta_t)-\balpha^*\|\nonumber\\
	&\leq L_{12}\rho^{K/2}\sqrt{\dfrac{\Delta}{2\mu}}\nonumber\\
	&\leq 1 \\
	&\leq g_{max}.
	\end{align}

	
	Hence,
	\[-\cX_t \geq -2(g_{max} + LR)\|\btheta_{t+1} - \btheta_t\|,\]
	or equivalently
	\begin{equation}
	\|\btheta_{t+1} - \btheta_t\| \geq  \dfrac{\cX_t}{2(g_{max} + LR)}.
	\end{equation}
	
	Combined with~\eqref{eq: PGD-2}, we get
	\[\begin{array}{ll}\gl(\btheta_{t+1}) - \gl(\btheta_t) &\leq  -\dfrac{L}{8}\dfrac{\cX_t^2}{\big(g_{max} + LR  \big)^2} +2\|e_t\|R,
	\end{array}\]
	where the inequality holds by using Cauchy Schwartz and our assumption that $\cSt$ is in a ball of radius $R$. 
	Hence,
	\[\arraycolsep=2pt\def\arraystretch{1.6}
	\begin{array}{ll}
	\dfrac{1}{T}\sum_{t=0}^{T-1}\cX_t^2 &\leq \dfrac{8\Delta_g (g_{max} + LR)^2}{LT} +\dfrac{16\delta R (g_{max} + LR)^2}{L}\\\\
	&\leq	\dfrac{\varepsilon^2}{2},
	\end{array}\]
	where the last inequality holds by using Lemma~\ref{lm:Updating-alpha} and choosing $K$ and $T$:
	 \[T\geq N_T(\varepsilon) \triangleq \dfrac{32\Delta_g (g_{max} + LR)^2}{L\varepsilon^2}, \quad {\rm and} \quad	K \geq N_K(\varepsilon) \triangleq \dfrac{\sqrt{8\kappa}}{\log{2}} \big(4\log(1/\varepsilon) +\log(2^{17} \bar{L}^6\bar{R}^6\Delta/L^2\lambda) \big),\]
	
	Therefore, using Lemma~\ref{lm:Updating-alpha}, there exists at least one index $\widehat{t}$ for which 
	
	\begin{equation}\label{eq: PGD-5}
	\cX_{\widehat{t}} \leq \varepsilon  \quad {\rm and} \quad \cY_{\widehat{t},K} \leq \dfrac{\varepsilon}{2}.
	\end{equation}
	Hence,

	\begin{equation}\label{eq: PGD-6}
	\begin{array}{ll}
	\arraycolsep=1.5pt\def\arraystretch{1.4}
	\cY(\btheta_{\widehat{t}},\balpha_K(\btheta_{\widehat{t}})) &= 
	\begin{array}{ll}
	&\max_{s}\,\, \langle \nabla_{\balpha} f(\btheta_{\widehat{t}}, \balpha_K(\btheta_{\widehat{t}})), s \rangle\\
	&\quad   \st \quad  \balpha_K(\btheta_{\widehat{t}}) + s \in \cSa, \, \|s\|\leq 1
	\end{array}
	\\
	\\
	&=\arraycolsep=1.5pt\def\arraystretch{1.4}
	\begin{array}{ll}&\max_{s}\,\, \langle \nabla_{\balpha} \fl(\btheta_{\widehat{t}}, \balpha_K(\btheta_{\widehat{t}})), s \rangle  + \lambda (\balpha_K(\btheta_{\widehat{t}}) -\bbalpha)^Ts\\
	&\quad  \st \quad  \balpha_K(\btheta_{\widehat{t}}) + s \in \cSa, \, \|s\|\leq 1
	\end{array}
	\\\\
	&\leq \cY_{{\widehat{t}},K} + \lambda\|\balpha_K(\btheta_{\widehat{t}}) -\bbalpha\|\\
	\\
	&\leq \varepsilon
	\end{array},
	\end{equation}
	
	where the first inequality uses Cauchy Shwartz and the fact that $\|s\|\leq 1$, and the last inequality holds due to~\eqref{eq: PGD-5}, the choice of $\lambda$ in the theorem and our assumption that $\|\balpha_K(\btheta_{\widehat{t}}) -\bbalpha\| \leq 2R$.

\end{proof}

\section{Numerical Results on Fashion MNIST with SGD}\label{App: SGD_results}

The results of using SGD optimizer are summarized in Table~\ref{tab:simulation_bs_3000_beta_5e-2} and Table~\ref{tab:simulation_bs_600_beta_5e-4}. Note SGD optimizer requires more tuning and therefore the results when $\text{batch-size}=3000$ is also included here.
\begin{table}[ht]
\centering
\resizebox{\textwidth}{!}{%
\begin{tabular}{ccccccccc}
\toprule
& \multicolumn{2}{c}{T-shirt/top} & \multicolumn{2}{c}{Coat} & \multicolumn{2}{c}{Shirt} & \multicolumn{2}{c}{Worst} \\
        & mean  & std    & mean  & std    & mean & std   & mean & std   \\ \midrule
Normal  & 850.26   & 8.59     & 806.78   & 18.92     & 558.72  & 30.99    & 558.72  & 30.99    \\ \midrule
MinMax  & 754.68   & 12.03     & 699.04   & 28.76     & 724.86  & 18.00    & 696.60  & 25.93    \\ \midrule
MinMax with Regularization & 756.16  & 13.60   & 701.02 & 30.07   & 723.14  & 18.52    & 698.16  & 26.96    \\ \bottomrule
\end{tabular}
}

\caption{The mean and standard deviation of the number of correctly classified samples when SGD (mini-batch) is used in training, $\lambda=0.05$, $ \text{batch-size}=3000$.}
\label{tab:simulation_bs_3000_beta_5e-2}
\end{table}

\begin{table}[ht]
\centering
\resizebox{\textwidth}{!}{%
\begin{tabular}{ccccccccc}
\toprule
& \multicolumn{2}{c}{T-shirt/top} & \multicolumn{2}{c}{Coat} & \multicolumn{2}{c}{Shirt} & \multicolumn{2}{c}{Worst} \\
        & mean  & std    & mean  & std    & mean & std   & mean & std   \\ \midrule
Normal  & 849.76   & 8.20     & 807.60   & 19.19     & 563.90  & 29.64    & 563.90  & 29.64    \\ \midrule
MinMax  & 755.34   & 13.72     & 702.60   & 26.11     & 723.70  & 18.92    & 700.46  & 24.02    \\ \midrule
MinMax with Regularization & 754.78  & 14.92   & 703.70 & 24.80   & 723.44  & 19.29    & 701.78  & 23.13    \\ \bottomrule
\end{tabular}
}
\caption{The mean and standard deviation of the number of correctly classified samples when SGD (mini-batch) is used in training, $\lambda=0.0005$, $ \text{batch-size}=600$.}
\label{tab:simulation_bs_600_beta_5e-4}
\end{table}

\section{Numerical Results on Fashion MNIST with Logistic Rgression Model}\label{App:logistic_results}

Table~\ref{tab:tab-numerical-results3} shows that the proposed formulation gives better accuracies under the worst category (Shirts), and the accuracies over three categories are more balanced. Note that this model is trained by gradient descent. The standard derivations not equal to $0$ is due to the early termination of the simulation.

\begin{table}[H]
\centering
\begin{tabular}{ccccccc}
\toprule
& \multicolumn{2}{c}{T-shirt/top} & \multicolumn{2}{c}{Pullover} & \multicolumn{2}{c}{Shirt} \\
          & mean   & std     & mean   & std    & mean  & std  \\ \midrule
\cite{1902.00146}   & 849.00    & 44.00      & 876.00    & 45.00 & 745.00   & 60.00      \\ \midrule
Proposed   & 778.48   & 8.78     & 773.46    & 8.76  & 740.60   & 9.26       \\ 
 \bottomrule
\end{tabular}
\caption{The mean and standard deviation of the number of correctly classified samples when gradient descent is used in training, $\lambda=0.1$.}

\label{tab:tab-numerical-results3}
\end{table}

\section{Numerical Results on Robust Neural Network Training}\label{app:robust}
Neural networks have been widely used in various applications, especially in the field of image recognition. However, these neural networks are vulnerable to adversarial attacks, such as Fast Gradient Sign Method (FGSM)~\cite{FGSM} and Projected Gradient Descent (PGD) attack~\cite{PGD}. These adversarial attacks show that a small perturbation in the data input can significantly change the output of a neural network. To train a robust neural network  against adversarial attacks, researchers reformulate the training procedure into a robust min-max optimization formulation~\cite{madry2018}, such as
$$
\min_{\mathbf{w}}\, \; \sum_{i=1}^{N}\;\max_{\delta_i,\;\text{s.t.}\;|\delta_i|_{\infty}\leq \varepsilon}   {\ell}(f(x_i+\delta_i;\mathbf{w}), y_i).\label{eq: Madry2}
$$
Here $\mathbf{w}$ is the parameter of the neural network, the pair $(x_i,y_i)$ denotes the $i$-th data point, and $\delta_i$ is the perturbation added to data point~$i$.  
As discussed in this paper, solving such a non-convex non-concave min-max optimization problem is computationally challenging. Motivated by the theory developed in this work, we approximate the above optimization problem with a novel min-max objective function which has concave inner optimization problem. 
To do so, we first approximate the inner maximization problem with a finite max problem
\begin{equation} \label{eq:adversaryxhatformulation}
\min_{\mathbf{w}}\, \; \sum_{i=1}^{N}\;\max \left\{   {\ell}(f(\hat{x}_{i0}(\mathbf{w});\mathbf{w}), y_i), \ldots,{\ell}(f(\hat{x}_{i9}(\mathbf{w});\mathbf{w}), y_i) \right\},
\end{equation}
where each $\hat{x}_{ij}(\mathbf{w})$ is the result of a targeted attack on sample $x_i$ aiming at changing the output of the network to  label~$j$. More specifically, $\hat{x}_{ij}(\mathbf{w})$ is obtained through the following procedure:

In the one but last layer of the neural network architecture for learning classification on MNIST we have 10 different neurons, each corresponding with one category of classification. For any sample~$(x_i,y_i)$ in the dataset and any $j=0,\cdots,9$, starting from $x_{ij}^0 = x_i$, we run gradient ascent to obtain the following chain of points:
$$
    x_{ij}^{k+1} = \text{Proj}_{B(x, \varepsilon)}\Big[x_{ij}^k + \alpha \nabla_x(Z_{j}(x_{ij}^k, \mathbf{w})-Z_{y_i}(x_{ij}^k, \mathbf{w}))\Big],\; k=0, \cdots, K-1,
$$
where $Z_j$ is the network logit before softmax corresponding to label $j$; $\alpha>0$ is the step-size; and $\text{Proj}_{B(x, \varepsilon)}[\cdot]$ is the projection to the infinity ball with radius $\varepsilon$ centered at $x$. Finally, we set $ \hat{x}_{ij}(\mathbf{w})= x_{ij}^K$ in \eqref{eq:adversaryxhatformulation}. 


Clearly,  we can replace the finite max  problem~\eqref{eq:adversaryxhatformulation} with a concave problem over a probability simplex, i.e., 


\begin{align}
\label{eq:finite-max2}
\min_{\mathbf{w}}\;\sum_{i=1}^{N}\;\max_{\mathbf{t} \in \mathcal{T}}\sum_{j=0}^{9}\;{t_j}\ell\left(f\left(x_{ij}^K;\mathbf{w}\right),y_i\right),\; \mathcal{T}=\{\mathbf{t}\in \mathbb{R}^{10}\,|\,\mathbf{t}\geq0,\;||\mathbf{t}||_{1}=1\},
\end{align}
which is non-convex in~$w$, but concave in~$\mathbf{t}$. Hence we can apply Algorithm~\ref{alg} to solve this opimization problem. We test~\eqref{eq:finite-max2} on MNIST dataset with a Convolutional Neural Network(CNN) with the architecture detailed in Table~\ref{tab:net-arch-robust}. The result of our experiment is presented in Table~\ref{tab:robust_nn_results_Appendix}.

\begin{table}[ht]
\centering

\begin{tabular}{@{}ll@{}}
\toprule
Layer Type           & Shape               \\ \midrule
Convolution $+$ ReLU & $5 \times 5 \times 20$     \\ 
Max Pooling          & $2 \times 2$                \\ 
Convolution $+$ ReLU & $5 \times 5 \times 50$ \\ 
Max Pooling          & $2 \times 2$             \\
Fully Connected $+$ ReLU & $800$ \\ 
Fully Connected $+$ ReLU & $500$ \\ 
Softmax & $10$ \\ \bottomrule
\end{tabular}
\caption{Model Architecture for the MNIST dataset.}
\label{tab:net-arch-robust}

\end{table}

\begin{table}[H]
\centering
\resizebox{\textwidth}{!}{%
\begin{tabular}{@{}rccccccc@{}}
\toprule
\multicolumn{1}{c}{} & \multirow{2}{*}{Natural} & \multicolumn{3}{c}{$\text{FGSM}\;L_{\infty}$~\cite{FGSM}} & \multicolumn{3}{c}{$\text{PGD}^{40}\; L_{\infty}$~\cite{PGD}} \\ \cmidrule(l){3-8} 
\multicolumn{1}{c}{} &  & $\varepsilon=0.2$ & $\varepsilon=0.3$ & $\varepsilon=0.4$ & $\varepsilon=0.2$ & $\varepsilon=0.3$ & $\varepsilon=0.4$ \\ \midrule
\cite{madry2018} with $\varepsilon=0.35$ & \bf{98.58}\% & 96.09\% & 94.82\% & 89.84\% & 94.64\% & 91.41\% & 78.67\% \\
\cite{zhang_icml_2019} with $\varepsilon=0.35$ & 97.37\% & 95.47\% & 94.86\% & 79.04\% & 94.41\% & 92.69\% & 85.74\% \\
\cite{zhang_icml_2019} with $\varepsilon=0.40$ & 97.21\% & 96.19\% & 96.17\% & 96.14\% & 95.01\% & 94.36\% & 94.11\% \\
Proposed with $\varepsilon=0.40$ & 98.20\% & \bf{97.04}\% & \bf{96.66}\% & \bf{96.23}\% & \bf{96.00}\% & \bf{95.17}\% & \bf{94.22}\% \\ \bottomrule
\end{tabular}%
}
\caption{Test accuracies under FGSM and PGD attacks. We set $K=10$ to train our model, and we take step-size 0.01 when generating PGD attacks. All adversarial images are quantified to 256 levels $\left(0-255\text{ integer}\right)$.}
\label{tab:robust_nn_results_Appendix}
\end{table}

\begin{remark}
 We would like to note that there is a mismatch between our theory and this numerical experiment. In particular, we assume smoothness of the objective function in our theory. However,	in this experiment, the ReLu activation functions  and the projection operator make the objective function non-smooth. We also did not include regularizer (strongly concave term) while solving~\eqref{eq:finite-max2} as the optimal regularizer was very small (and almost zero). 
 \end{remark}
 
\begin{remark}
 The main take away from this experiment is to demonstrate the practicality of the following idea: when solving  general challenging non-convex min-max problems, it might be possible to approximate it with one-sided non-convex min-max problems where the objective function is solvable with respect to one of the player's variable. Such a reformulation leads to computationally tractable problems and (possibly) no loss in the performance. 
\end{remark}

\section{Experimental Setup of Fair Classifier}\label{App: network}

\begin{table}[ht]
\centering
\label{tab:net-arch}
\begin{tabular}{@{}ll@{}}
\toprule
Layer Type           & Shape               \\ \midrule
Convolution $+$ tanh & $3 \times 3 \times 5$     \\ 
Max Pooling          & $2 \times 2$                \\ 
Convolution $+$ tanh & $3 \times 3 \times 10$ \\ 
Max Pooling          & $2 \times 2$             \\
Fully Connected $+$ tanh & $250$ \\ 
Fully Connected $+$ tanh & $100$ \\ 
Softmax & $3$ \\ \bottomrule
\end{tabular}
\caption{Model Architecture for the Fashion MNIST dataset. \cite{xiao2017/online}}
\end{table}

\begin{table}[ht]
\centering

\label{tab:net-para-1}
\begin{tabular}{@{}llll@{}}
\toprule
Parameter     &       &       &      \\ \midrule
Learning Rate & 0.1  & 0.05 & 0.01 \\ 
Epochs        & 4000 & 1000 & 500  \\ \bottomrule
\end{tabular}
\caption{Training Parameters for the Fashion MNIST dataset with gradient descent. \cite{xiao2017/online}}
\end{table}

\begin{table}[H]
\centering
\label{tab:net-para-2}
\begin{tabular}{@{}llll@{}}
\toprule
Parameter     &       &       &      \\ \midrule
Learning Rate & $10^{-4}$  & $10^{-5}$ & $10^{-6}$ \\ 
Iterations        & 4000 & 4000 & 4000  \\
Batch-size    & 600 && \\
\bottomrule
\end{tabular}
\caption{Training Parameters for the Fashion MNIST dataset with Adam. \cite{xiao2017/online}}
\end{table}
\begin{table}[ht]
\centering
\label{tab:net-para-3}
\begin{tabular}{@{}llll@{}}
\toprule
Parameter     &       &       &      \\ \midrule
Learning Rate & $10^{-3}$  & $10^{-4}$ & $10^{-5}$ \\ 
Iterations        & 8000 & 8000 & 8000 \\
\bottomrule
\end{tabular}
\caption{Training Parameters for the Fashion MNIST dataset with SGD. \cite{xiao2017/online}}
\end{table}

\section{Links}\label{App:links}
Robust NN Training: \url{https://github.com/optimization-for-data-driven-science/Robust-NN-Training}\\\\    
Fair Classifier: \url{https://github.com/optimization-for-data-driven-science/FairFashionMNIST}

\end{document}